\begin{document}
%
\newtheorem{theorem}{Theorem}
\newtheorem{prop}{Proposition}
\newtheorem{proposition}{Proposition}
\newtheorem{lemma}{Lemma}

\newcommand{\beqn}{\begin{equation}}
\newcommand{\eeqn}{\end{equation}}
\def\R{{\mathbb R}}
\def\D{{\cal D}}
\newcommand{\cleq}{\preccurlyeq}
\newcommand{\nn}{\nonumber}
\def\la{{\langle}}
\def\ra{{\rangle}}
\def\pa{{\partial}}
\def\ep{\epsilon}
\def\F{{\cal F}}
\def\oml{{\ddot{\text{o}}}}

\title{ Uniqueness for the inverse backscattering problem for angularly controlled potentials}

\author{
Rakesh \\
Department of Mathematical Sciences\\
University of Delaware\\
Newark, DE 19716, USA\\
~\\
Email: rakesh@math.udel.edu\\
\and
Gunther Uhlmann\\
Department of Mathematics\\
University of Washington\\
Seattle, WA 98195, USA\\
~\\
Email: gunther@math.washington.edu
 }

\date{February 12, 2014}
\maketitle

\begin{abstract}
We consider the problem of recovering a smooth, compactly supported potential on $\R^3$ from its backscattering data. We show that if two such potentials have the same backscattering data and the difference of the two potentials has controlled angular derivatives then the two potentials are identical. In particular, if two potentials differ by a finite linear combination of spherical harmonics with radial coefficients and have the same backscattering data then the two potentials are identical.
\end{abstract}




\section{Introduction}

\subsection{Goal}
 
 Let $B$ denote the closed unit ball in $\R^3$, $S$ denote the unit sphere in $\R^3$ and suppose $q(x)$ is a smooth real valued 
 function on $\R^3$ with support in $B$.  Given a unit vector $\omega$ in $\R^3$, let $U(x,t, \omega)$ be the solution of the IVP(Initial 
 Value Problem)
\begin{gather}
U_{tt} - \Delta U + qU = 0, \qquad (x,t) \in \R^3 \times \R,
\label{eq:Ude}
\\
U(x,t) = \delta( t - x \cdot \omega), \qquad x \in \R^3, ~ t< -1;
\label{eq:Uic}
\end{gather}
here $x \cdot \omega$ denotes the inner product of $x$ and $\omega$ and $\delta( \cdot)$ is the Dirac delta distribution.
We may express $U(x,t,\omega)$ in the form
\[
U(x,t, \omega) = \delta( t - x \cdot \omega) + u(x,t,\omega)
\]
where $u(x,t,\omega)$ is the solution of the IVP
\begin{gather}
u_{tt} - \Delta u + qu = - q \, \delta(t - x \cdot \omega), \qquad (x,t) \in \R^3 \times \R
\label{eq:utempde}
\\
u(x,t) = 0, \qquad x \in \R^3, ~t< -1.
\label{eq:utempic}
\end{gather}

If we regard $q(x)$ as representing some physical property of a medium occupying $\R^3$ then $u(x,t,\omega)$ may be 
regarded as the response of the medium to an incoming plane wave $\delta(t- x \cdot \omega)$ moving in the direction $\omega$. The ``far field 
pattern'' (defined carefully later) of the medium response, measured in the direction of the unit vector $\theta$ in $\R^3$,
 with delay $s \in \R$ is 
\[
\lim_{r \to \infty} r u(r \theta, r - s, \omega).
\]
A longstanding open problem is the recovery of the the medium property $q$ from the backscattered far field data (which 
consists of far field pattern measured only in the direction $\theta=-\omega$, for all incoming wave directions $\omega$ 
and for all delays $s \in \R$). The problem is equivalent to the inversion of the map from $q$ to the backscattered data 
and an important first step is to prove the injectivity of this map, which itself is a long standing open problem. Our main result 
is that if the backscattered data for two potentials coincide and the difference of the two potentials has controlled angular 
derivatives then the two potentials are identical. 

\subsection{The problems and the results}

We first define what we mean by the far field pattern with the help of the following theorem.
\begin{theorem}[Properties of the forward map]\label{thm:forward} 
Suppose $q(x)$ is a smooth function on $\R^3$ which is supported in the unit ball $B$ and $\omega, \theta$ are arbitrary unit vectors in $\R^3$.
\vspace{-0.2in}
\begin{enumerate}
\item[(a)] $u(x,t,\omega)$ is supported in the region $ t \geq x \cdot \omega$ and, in this region, $u$ is the unique smooth solution of the characteristic IVP problem 
\begin{gather}
u_{tt} - \Delta u + qu = 0, \qquad (x,t) \in \R^3 \times \R, ~~t \geq x \cdot \omega,
\label{eq:ude}
\\
u(x, x \cdot \omega, \omega) = - \frac{1}{2} \int_{-\infty}^0 q(x + \sigma \omega) \, d \sigma,
\qquad x \in \R^3
\label{eq:ubc}
\\
u(x,t, \omega) = 0, \qquad x \in \R^3, ~t< -1.
\label{eq:uic}
\end{gather}
\item[(b)] As distributions in $s \in \R$ we have
\beqn
\lim_{r \rightarrow \infty}  r \, u(r \theta, r-s, \omega) =   - \frac{1}{2 \pi}
\int_{x \cdot \theta = 1} (\theta \cdot \nabla u)(x,1 -s,\omega) \, dS_x.
\label{eq:farfield}
\eeqn
\item[(c)] Further, as distributions in $t \in \R$, we have
\beqn
\pa_t \int_{x \cdot \theta = 1} u(x, t, \omega) \, dS_x
= - \int_{x \cdot \theta = 1} (\theta \cdot \nabla u)(x,t,\omega) \, dS_x ,
\label{eq:DN}
\eeqn
and
\beqn
\int_{x \cdot \theta=\tau} u(x,t, \omega) \, dS_x = \int_{x \cdot \theta=1} u(x,t-\tau+1,\omega) \, dS_x,
\qquad \text{for all} ~ \tau \geq 1.
\label{eq:transport}
\eeqn
\end{enumerate}
\end{theorem}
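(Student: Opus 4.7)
The plan is to handle parts (a), (c), (b) in that order, since the far-field identification in (b) rests on the plane-integral structure established in (c).

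For part (a), the support statement $u \equiv 0$ on $\{t<x\cdot\omega\}$ follows from finite speed of propagation applied to (3)--(4): the source $-q\,\delta(t-x\cdot\omega)$ lives on a characteristic hyperplane and $u$ vanishes for $t<-1$, so nothing propagates into $\{t<x\cdot\omega\}$. To derive the boundary condition (6), I would substitute the ansatz $u = H(t-x\cdot\omega)\,v(x,t,\omega)$ into (3) with $v$ smooth in $\{t\geq x\cdot\omega\}$. Because $|\omega|^2=1$, the $\delta'(t-x\cdot\omega)$ coefficients cancel; matching $\delta(t-x\cdot\omega)$ coefficients yields the characteristic transport identity $(v_t+\omega\cdot\nabla v)|_{t=x\cdot\omega} = -q(x)/2$, and matching Heaviside coefficients gives $v_{tt}-\Delta v+qv=0$ in $\{t\geq x\cdot\omega\}$, which is (5). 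Parametrizing the null line through $(x,x\cdot\omega)$ by $\sigma\mapsto(x-\sigma\omega,\, x\cdot\omega-\sigma)$ and integrating the transport identity from $\sigma=\infty$ (where $v=0$ by vanishing of $u$ in the past) back to $\sigma=0$ produces (6).

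For part (c), set $W(\tau,t):=\int_{x\cdot\theta=\tau} u(x,t,\omega)\,dS_x$. Finite propagation from the compactly supported sources makes $u(\cdot,t)$ compactly supported in $x$ for each fixed $t$, so the tangential Laplacian on the plane integrates to zero. Differentiating under the integral and using $\Delta u = u_{tt}$ for $\tau\geq 1$ (outside $\mathrm{supp}\, q$) produces the one-dimensional free wave equation $W_{tt}=W_{\tau\tau}$ on $\tau\geq 1$. Since $W\equiv 0$ for $t\leq -1$ and $W(\tau,t)\to 0$ as $\tau\to\infty$ at fixed $t$, the d'Alembert decomposition $W(\tau,t)=F(t-\tau)+G(t+\tau)$ forces $G$ to be a constant and hence zero, giving (9). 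The chain rule identifies $\pa_\tau W = \int_{x\cdot\theta=\tau}\theta\cdot\nabla u\,dS_x$, while the right-moving wave property gives $\pa_t W+\pa_\tau W=0$; specializing to $\tau=1$ is exactly (8).

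For part (b), I would first obtain the standard exterior far-field expansion $u(r\theta,t,\omega) = r^{-1}A(t-r,\theta,\omega)+O(r^{-2})$ as $r\to\infty$, via a Kirchhoff-type surface representation of $u$ on $|y|=1$ (or Radon-transform inversion of the free wave equation outside the ball), using compact $x$-support of $u$ at each time. Evaluating $F(s)=W(\tau,\tau+s)$ as $\tau\to\infty$ by substituting the expansion and parametrizing the plane by $x=\tau\theta+y$ with $y\perp\theta$, so that $|x|\sim\tau+|y|^2/(2\tau)$ and $x/|x|\to\theta$, a polar integration in $y$ with the substitution $w=|y|^2/(2\tau)$ collapses the $\tau$-dependence and yields $F(s)=2\pi\int_{-\infty}^s A(v,\theta,\omega)\,dv$. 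Hence $\lim_{r\to\infty} r\,u(r\theta,r-s,\omega)=A(-s,\theta,\omega)=F'(-s)/(2\pi)$. Rewriting $F'$ through $F(s)=\int_{x\cdot\theta=1} u(x,s+1,\omega)\,dS_x$ and applying (8) converts $\pa_s$ into $-\int\theta\cdot\nabla u\,dS_x$, which is formula (4). The main obstacle here is to justify both the existence of the far-field amplitude $A$ and the interchange of limit with plane integration, including uniform compact $x$-support of $u$ inside expanding cones and explicit control of the $O(r^{-2})$ remainder when integrated over the large plane; the remaining steps are a clean application of jump relations and d'Alembert analysis.
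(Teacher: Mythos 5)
Your treatments of (a) and (c) follow the paper's own arguments almost step for step: the ansatz $u=H(t-x\cdot\omega)\,v$, cancellation of the $\delta'$ terms via $|\omega|^2=1$, and integration of the transport identity along null lines is exactly how the paper derives the characteristic data, and your reduction of (c) to the one-dimensional wave equation for $W(\tau,t)=\int_{x\cdot\theta=\tau}u\,dS_x$, killing the left-moving d'Alembert term by decay as $\tau\to\infty$, is the paper's proof. However, in (a) there is a genuine gap: the theorem asserts that $u$ is the \emph{unique smooth} solution of the characteristic IVP, and your ansatz \emph{presupposes} that $v$ is smooth up to the characteristic plane $t=x\cdot\omega$. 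Finite speed of propagation gives the support statement, but neither the regularity of $u$ up to the plane nor the well-posedness of the characteristic IVP. The paper spends most of its proof of (a) on exactly this: a progressing wave expansion $U=\delta(t-x\cdot\omega)+\sum_{j=0}^{N}a_j(x)(t-x\cdot\omega)_+^j+R_N$, with the $a_j$ obtained from transport equations and the remainder $R_N$ solving an IVP with $C^{N-1}$ right-hand side, giving a $C^{N-1}$ solution for every $N$; uniqueness (by energy estimates for the characteristic problem) then upgrades this to smoothness. Without some version of this, both ``smooth'' and ``unique'' in the statement are unproved.

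For (b) you take a genuinely different route. The paper never forms a pointwise far-field expansion: it represents $u$ in the half-space $z\geq 1$ by convolving the Neumann data $f=u_z|_{z=1}$ with the free-space Green's function, obtaining $u(0,z+1,z+\sigma)=-\frac{z^{-1}}{2\pi}\cdot z\int_{\R^2}f(y',z+\sigma-|(y',z)|)\,|(y',z)|^{-1}dy'$, and passes to the limit $z\to\infty$ directly as distributions in $\sigma$, using only continuity of distributions after a change of variables. You instead posit a Friedlander radiation field $u\sim r^{-1}A(t-r,\theta,\omega)$ and recover (\ref{eq:farfield}) by integrating the expansion over the moving plane and invoking (\ref{eq:DN}); your algebra is correct — the substitution $w=|y|^2/(2\tau)$ does collapse the $\tau$-dependence, and $F'(-s)=2\pi A(-s)$ matches the target via (\ref{eq:DN}). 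But the step you defer is the crux, and it is subtler than an ``interchange of limits'': a bare $O(r^{-2})$ pointwise remainder is \emph{not} integrable over the plane, since $\int_{\R^2}(\tau^2+|y'|^2)^{-1}dy'$ diverges logarithmically; one must exploit the forward light-cone support $|x|\leq t+2$ to confine the plane integral to a disk of radius $O(\sqrt{\tau})$ (which you hint at), and near $\theta=\omega$ the amplitude $A$ has a jump in retarded time, so the expansion and the limit in (\ref{eq:farfield}) only hold distributionally in $s$. What the representation-formula route buys is precisely that it delivers the distributional limit with remainder control built in — the hard analytic content you acknowledge but do not supply is the entire substance of the paper's proof of (b). As a side benefit, the paper's argument also yields the stronger off-axis statement with $x^*\perp\theta$ noted after the theorem, which your expansion-based route would require extra uniformity to reproduce.
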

Actually our proof of (\ref{eq:farfield}) shows something stronger; if $x^* \in \R^3$ is orthogonal to $\theta$ then
\[
lim_{r \rightarrow \infty}  r \, u(x^*+r \theta, r-s, \omega) =   - \frac{1}{2 \pi}
\int_{x \cdot \theta = 1} (\theta \cdot \nabla u)(x, 1-s,\omega) \, dS_x.
\]

The equation (\ref{eq:farfield}) gives a kind of Friedlander limit (see [Fr73]) so
\[
 - \frac{1}{2 \pi} \int_{x \cdot \theta = 1} (\theta \cdot \nabla u)(x, 1-s,\omega) \, dS_x
\]
is a good candidate for being called the far field pattern and we define the far field pattern
\[
\alpha(\theta, \omega, s) :=  - \frac{1}{2 \pi} \int_{x \cdot \theta =1} (\theta \cdot \nabla u)(x,1-s, \omega) \, dS_x,
\qquad \theta, \omega \in S, ~ s \in \R,
\]
which is, up to a constant multiple, the Radon transform, on the
 plane $x \cdot \theta = 1$, of the directional derivative of $u(x, 1-s, \omega)$ in the direction $\theta$.
So our goal is the recovery of $q(\cdot)$ from the backscattering data
\[
\alpha( - \omega, \omega, s)
= -\frac{1}{2 \pi} \int_{x \cdot \omega =-1} (\omega \cdot \nabla u)(x, 1-s, \omega) \, dS_x, 
\qquad \omega \in S, ~ s \in \R.
\]

We make a remark about the regularity of $\alpha(\theta, \omega, s)$. As can be seen from the proof of 
Theorem \ref{thm:forward}, $u(x,t, \omega)$ is a function supported on $t \geq x \cdot \omega$ and is the restriction to the region
$t \geq x \cdot \omega$ of a smooth function on $\R^3 \times \R$. Let $a(x,t,\omega)$ be one such smooth extension of $u$; then
$u(x,t,\omega) = a(x,t,\omega) H(t- x \cdot \omega)$ and hence
\[
(\theta \cdot \nabla u)(x,t,\omega) =  (\theta \cdot \nabla a)(x,t,\omega) H(t - x \cdot \omega) - (\theta \cdot \omega) a(x,t,\omega) \delta( t- x \cdot \omega).
\]
An analysis of the integral defining $\alpha(\theta, \omega, s)$ shows that it is smooth in $s, \omega,\theta$ over the region
where $\theta \neq \omega$ and $\theta \neq - \omega$ and is zero for $s>2$. One can also show that when $\theta = \omega$, 
the peak scattering case, as a distribution on $s \in \R$, we have
\[
\alpha( \omega, \omega, s) =  \frac{1}{4 \pi}\left ( \int_{\R^3} q(x) \, dx \right ) \, \delta(s) + \text{smoother terms}
\]
and is zero for $s>0$. When $\theta = -\omega$ then $\alpha(-\omega, \omega,s)$ is a smooth function of $s$ and is zero for $s>2$.

We introduce some notation to state our main result. To any $\rho \geq 0$ and $\omega \in S$, we associate a unique 
$x=\rho \omega \in \R^3$. 
We define the 
angular derivatives
$\Omega_{ij} = x_i \pa_j - x_j \pa_i$ for $i,j=1,2,3$, $i \neq j$ and note that 
\[
\Delta_S := \sum_{i < j} \Omega_{ij}^2
\]
is the spherical Laplacian, that is $\Delta_S$ is the Laplace-Beltrami operator on the unit sphere. Further
\beqn
\int_S ( \Omega_{ij} f) \, g \, dS = - \int_S f \, ( \Omega_{ij} g) \, dS
\label{eq:ibyp}
\eeqn
for arbitrary smooth functions $f,g$ on $S$. 

Our main result is the following uniqueness theorem for the inverse backscattering problem.
\begin{theorem}[Uniqueness for back-scattering data]\label{thm:backscatter}
Suppose $q_i$, $i=1,2$ are smooth functions on $\R^3$ with support in the unit ball $B$ and
$\alpha_i( \cdot, \cdot, \cdot)$ the corresponding far field data. If there is a constant $C$, independent of $\rho$ and $i,j$ 
such that
\beqn
\int_S | \Omega_{i,j} (q_1 - q_2)(\rho \omega)|^2 \, d \omega
\leq C \int_S | (q_1- q_2)(\rho \omega) |^2 \, d \omega, \qquad \forall \rho \in [0,1], ~ \forall ~ i,j=1,2,3
\label{eq:angular}
\eeqn
then $\alpha_1(-\omega, \omega, s) = \alpha_2(-\omega, \omega, s)$ for all $ \omega \in S$ and all $s \in [0,2]$ implies $q_1=q_2$.
\end{theorem}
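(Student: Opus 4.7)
\emph{Setup.} I would set $v(x,t,\omega) = u_1 - u_2$ and $q = q_1 - q_2$. By Theorem \ref{thm:forward}(a), $v$ is supported in $t \geq x \cdot \omega$ and satisfies
\[
v_{tt} - \Delta v + q_1 v = -q\, u_2,\qquad v(x, x\cdot\omega, \omega) = -\tfrac{1}{2}\int_{-\infty}^0 q(x+\sigma\omega)\,d\sigma,\qquad v \equiv 0 \text{ for } t<-1,
\]
so $v$ depends linearly on $q$ with $q_1,q_2$ fixed. By Theorem \ref{thm:forward}(c) applied with $\theta = -\omega$, the hypothesis $\alpha_1(-\omega,\omega,\cdot) = \alpha_2(-\omega,\omega,\cdot)$ on $[0,2]$ forces $\partial_t \int_{x\cdot\omega = -1} v\,dS_x = 0$ for $t \in [-1, 1]$; combined with the vanishing for $t<-1$ this yields $\int_{x\cdot\omega = -1} v(x,t,\omega)\,dS_x = 0$ for every $\omega \in S$ and every $t \leq 1$.

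\emph{Integral identity and Volterra inequality.} The plan is to derive a Green-type identity by pairing the PDE for $v$ against a carefully chosen test wave $\psi$ and integrating over a space-time region bounded in part by the characteristic $t = x\cdot\omega$ and the plane $x\cdot\omega = -1$. Using the vanishing plane integral, the explicit characteristic data, and the source $-qu_2$, this should produce an identity of the schematic form
\[
\int_{-1}^{1} K(\tau,s;\omega)\Bigl(\int_{x\cdot\omega = \tau} q(x)\,dS_x\Bigr) d\tau \;=\; \mathcal{E}[q, v, u_2](\omega,s),
\]
in which the left side is a weighted plane-Radon transform of $q$ in $\tau$ (invertible after varying $\omega$) and the right side $\mathcal{E}$ couples $q$ to $u_2$ and $v$ through space-time integrals. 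Setting $f(\rho) := \int_S |q(\rho\omega)|^2\,d\omega$, squaring the identity in $\omega$ and reorganising in spherical coordinates, I would aim for a Volterra-type inequality $f(\rho) \leq C \int_\rho^1 f(r)\,dr$ on $[0,1]$, with $f(1) = 0$ by the support of $q$. Gronwall's inequality then yields $f \equiv 0$ and hence $q_1 = q_2$.

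\emph{Main obstacle.} The principal difficulty lies in estimating the error term $\mathcal{E}$. It couples $q$ to $u_2$ and $v$, whose angular behavior on the integration region is not pointwise controlled by $q$. To bound $\mathcal{E}$ by $f$ one must integrate by parts to move $x$-derivatives from $u_2,v$ onto $q$, and the geometry of backscattering (incoming and outgoing directions parallel) leaves precisely the tangential angular derivatives $\Omega_{ij} q$ on the boundary of the manipulation rather than full gradients. This is where the hypothesis (\ref{eq:angular}) enters decisively: it allows one to absorb $\|\Omega_{ij} q(\rho \cdot)\|_{L^2(S)}$ into $\|q(\rho \cdot)\|_{L^2(S)}$ with a uniform constant in $\rho$. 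Arranging the identity — the choice of test wave $\psi$, the integration region, and the directions of integration by parts — so that only angular derivatives of $q$ remain in the error is the technical heart of the argument.
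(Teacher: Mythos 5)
Your overall architecture points in the right direction and matches the paper's skeleton at the coarsest level: an identity obtained by pairing against an adjoint wave, a radial quantity $E(\rho)=\int_S|p(\rho\omega)|^2\,d\omega$, and a Volterra/Gronwall endgame. But two essential steps are missing or misidentified, and they are the heart of the proof. First, your identity is left schematic, and the specific choice matters: the paper pairs the equation for $v$ against the \emph{time-reversed full solution} $W_1(x,t)=U_1(x,2\tau-t,\omega)$ for each $\tau$, so that the product of the two progressing $\delta$-waves concentrates exactly on the plane $x\cdot\omega=\tau$, giving the sharp identity
$8\pi\,\alpha(-\omega,\omega,-2\tau)=\int_{x\cdot\omega=\tau}p\,dS_x+\int_{-1}^{\tau}\int_{x\cdot\omega=t}k(x,\omega,\tau)\,p\,dS_x\,dt$
with a smooth bounded kernel $k$ built from $u_1,u_2$ (Proposition \ref{prop:identity}). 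There is no $\tau$-convolution on the left and no inversion of a weighted Radon-type transform anywhere; your assertion that the left side is ``invertible after varying $\omega$'' is both unjustified and unnecessary.

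Second, and more importantly, your sketch contains no mechanism for passing from plane integrals to sphere integrals, which is the genuinely new ingredient. Vanishing data controls $P(\tau,\omega)=\int_{x\cdot\omega=\tau}p\,dS_x$ by a Volterra term, but $E(\tau)$ lives on spheres. The paper's device (motivated by [LRS86]) is to differentiate the Radon transform, $P_\tau(\tau,\omega)=\int_{x\cdot\omega=\tau}(\omega\cdot\nabla p)\,dS_x$, and at each point of the plane decompose $\omega=\frac{\tau}{r}\hat r+\frac{\rho}{r}\alpha$ into radial and tangential parts; since the plane $x\cdot\omega=\tau$ is tangent to the sphere $|x|=\tau$ at $\tau\omega$, integrating the radial part along rays produces the pointwise value $-2\pi\tau\,p(\tau\omega)$, leaving only angular derivatives $\Omega_{ij}p$ at radii $\rho\geq\tau$ with weight $\rho/\sqrt{\rho^2-\tau^2}$. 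This yields $\tau^2E(\tau)\cleq\int_S|P_\tau|^2\,d\omega+\int_\tau^1\frac{\rho}{\sqrt{\rho^2-\tau^2}}\sum_{i<j}\int_S|(\Omega_{ij}p)(\rho\omega)|^2\,d\omega\,d\rho$, and it is \emph{here} that hypothesis (\ref{eq:angular}) is used, to absorb the angular terms into $E(\rho)$. You instead locate the angular derivatives as arising from integrations by parts moving derivatives off $u_2$ and $v$ onto $q$; that is not what happens — the kernel $k$ is estimated crudely by Cauchy--Schwarz and no derivative is ever transferred from the adjoint factors. Finally, your target inequality $f(\rho)\le C\int_\rho^1 f$ glosses over the weakly singular kernel $(\rho-\tau)^{-1/2}$ (which forces one self-substitution of the inequality before Gronwall applies) and the degenerate factor $\tau^2$ (handled by working on $[\ep,1]$ and letting $\ep\to0$); these are repairable technicalities, but the tangent-plane estimate is a missing idea without which the proposal does not close.
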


Let $\{ \phi_n(\omega) \}_{n \geq 1}$ denote an orthonormal basis for $L^2(S)$ consisting of spherical harmonics. Each $\phi_n(\omega)$ is the restriction to $S$ of a homogeneous harmonic polynomials $\phi_n(x)$, and the $\phi_n$ are indexed so that if $m<n$ then 
$\text{deg}( \phi_m) \leq \text{deg}( \phi_{n} )$; further 
\beqn
\Delta_S \phi_n = - d_n(d_n+1) \phi_n
\label{eq:DSphi}
\eeqn
where $d_n= \text{deg}(\phi_n)$ -  see [Se66] and [SW71] for details.
If $p(x)$ is a smooth function on $\R^3$ then $p$ has a spherical harmonic expansion
$
p(\rho \omega) = \sum_{n=1}^\infty p_n(\rho) \phi_n(\omega).
$
One can show\footnote
{
Since
$
(\Omega_{ij} p ) (\rho \omega) = \sum_{n=1}^\infty p_n(\rho) \, (\Omega_{ij} \phi_n ) (\omega)
$
so
\begin{align*}
\sum_{i<j} \int_S  &  (\Omega_{ij} p ) (\rho \omega)^2 \, d \omega
 = \sum_{m,n=1}^\infty p_n(\rho) \, p_m(\rho) \int_S \sum_{i<j} 
(\Omega_{ij} \phi_m ) (\omega) (\Omega_{ij} \phi_n ) (\omega) \, d \omega
\\
& = - \sum_{m,n=1}^\infty p_n(\rho) \, p_m(\rho) \int_S \sum_{i<j} 
(\Omega_{ij}^2 \phi_m ) (\omega)  \, \phi_n  (\omega) \, d \omega
 = - \sum_{m,n=1}^\infty p_n(\rho) \, p_m(\rho) \int_S 
(\Delta_S \phi_m ) (\omega)  \, \phi_n  (\omega) \, d \omega
\\
& = \sum_{m,n=1}^\infty p_n(\rho) \, p_m(\rho) \, d_m( d_m+1) \int_S 
 \phi_m  (\omega)  \, \phi_n  (\omega) \, d \omega
= \sum_{n=1}^\infty d_n ( d_n + 1) \, p_n(\rho)^2 .
\end{align*}
}
 that $p(x) = (q_1-q_2)(x)$ satisfies the angular derivative condition (\ref{eq:angular}) in Theorem \ref{thm:backscatter} iff we can find $C$ (independent of $\rho$) so that
\beqn
\sum_{n=1}^\infty d_n ( d_n + 1) \, p_n(\rho)^2 
\leq C \sum_{n=1}^\infty p_n(\rho)^2, \qquad \forall \rho \in [0,1].
\label{eq:angcond}
\eeqn
Clearly (\ref{eq:angcond}) holds if $p_n(\cdot) =0$ for all $n \geq N$ for some $N$, but (\ref{eq:angcond}) also holds for some $p$ 
with infinite spherical harmonic expansions. In fact, one may show that $d_n < \sqrt{n}$, so if we take $p_1(\rho)$ to be some non-
zero function, and choose $p_n(\rho)$ so that
\[
(\sqrt{n} + 1) |p_n(\rho)| \leq |p_{n-1}(\rho)|, \qquad \forall n \geq 2, ~ \rho \in [0,1]
\]
then $p$ would satisfy (\ref{eq:angcond}) for some $C$.

Theorem \ref{thm:backscatter}, shows, in particular, that two radial $q(x)$ are identical if their backscattering data are identical; even
this result, for the special case of radial potentials, is new. 
If $\alpha(\theta, \omega,s)$ is the far field pattern of $q(x)$ and $\beta(\theta, \omega, s)$ the far field pattern of its translate 
$q(x+a)$, $a \in \R^3$, then (see subsection \ref{subsec:translate})
\[
\beta(\theta, \omega, s) = \alpha(\theta, \omega,s + a \cdot(\theta-\omega)).
\]
This can be used to show that if the backscattering data for $q_1(|x|)$ equals the back-scattering data for $q_2(|x-a|)$ then both 
these functions are zero. In fact, if the far field patterns of $q_1(|x|)$ and $q_2(|x|)$ are $\alpha_1$ and $\alpha_2$ respectively, 
then the backscattering data for $q_1(|x|)$ and $q_2(|x-a|)$ are $\alpha_1(-\omega,\omega,s)$ and 
$\alpha_2(-\omega,\omega, s - 2 a \cdot \omega)$ respectively. So if
\[
\alpha_1(-\omega, \omega,s) = \alpha_2(-\omega, \omega, s - 2a\cdot \omega ),
\qquad \forall \omega \in S, ~ s \in \R,
\]
and noting that $\alpha_i(-\omega, \omega, s)$, $i=1,2$ are independent of $\omega$ because $q_i$ are radial, we obtain that
$\alpha_i(-\omega, \omega, s)$ are independent of $s$. This forces $\alpha_i(-\omega, \omega,s)=0$ for all $\omega,s$ because
far field patterns are always zero for $s$ large enough. Hence 
from Theorem \ref{thm:backscatter}, applied to the radial case, we obtain $q_1=q_2=0$.

The proof of Theorem \ref{thm:backscatter} relies on two ideas. We use an identity obtained by using the solution of an adjoint problem, an idea used earlier by Santosa and Symes in [SnSy88], and by Stefanov in [St90]. Also, for functions $f$ on $\R^3$, we estimate the $L^2$ norm of $f$ on spheres by the Radon transform of $f$ on planes outside the sphere using an idea motivated by the material on pages 185-190 in [LRS86]. The Radon transform estimate could also be obtained using Dean's theorem - see Chapter 7 in [Is06] - but we get stronger results using the idea in [LRS86].


Next we give some elementary, known but interesting, results with proofs which are perhaps a little simpler than the 
original proofs of these results.
\begin{theorem}(Elementary results)\label{thm:elementary}
Suppose $q_i$, $i=1,2$ are smooth functions on $\R^3$ with support in the unit ball and
 $\alpha_i(\cdot, \cdot, \cdot)$ the corresponding far field patterns. 
\vspace{-0.2in}
\begin{enumerate}
\item[(a)] If $q_2 \geq q_1$ and $\alpha_1(-\omega,\omega, s) = \alpha_2(-\omega,\omega, s)$ for a 
fixed $\omega\in S$ and all $s \in [-2,2]$,  then $q_1 = q_2$.
\item[(b)] There is an $M>0$ such that if $\|q_i\|_{C^2(\R^3)} \leq M$ for $i=1,2$ and
$\alpha_1(-\omega,\omega, s) = \alpha_2(-\omega,\omega, s)$ for all $\omega \in S$ and all $s \in [0,2]$ 
then $q_1 = q_2$.
\end{enumerate}
\end{theorem}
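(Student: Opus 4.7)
For \emph{part (a)}, set $p := q_2 - q_1 \geq 0$ and $w := u_1 - u_2$. The difference satisfies the inhomogeneous wave equation $(\pa_t^2 - \Delta + q_1) w = p \, U_2$ on $\R^3 \times \R$, where $U_2 = \delta(t - x \cdot \omega) + u_2$, and $w \equiv 0$ for $t < -1$. Applying (\ref{eq:DN}) to $w$ with $\theta = -\omega$, the backscattering equality on $s \in [-2,2]$ says that $\pa_t \int_{x \cdot \omega = -1} w \, dS_x = 0$ for $t \in [-1,3]$; since $p$ is supported in $B$ the characteristic value $w|_{t = x \cdot \omega}$ vanishes on the plane $x \cdot \omega = -1$, pinning the constant at zero.

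To access interior information I pair against auxiliary solutions. For each $\tau \in [-1,1]$ let $V_\tau$ solve the homogeneous equation with potential $q_1$ and incoming plane wave $V_\tau = \delta(t + x \cdot \omega - 2\tau) + v_\tau$ (direction $-\omega$, delay $2\tau$). Green's identity on $\R^3 \times \R$ for $(w, V_\tau)$ yields
\[
\iint V_\tau \, p \, U_2 \, dx \, dt \;=\; \left[ \int \bigl( V_\tau \, w_t - V_{\tau,t} \, w \bigr) \, dx \right]_{t=-\infty}^{t=+\infty}.
\]
The $t \to -\infty$ contribution vanishes by the initial condition on $w$, and the $t \to +\infty$ contribution reduces, via (\ref{eq:farfield}), to the backscattering pattern of $w$ in direction $-\omega$, which is zero by hypothesis. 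Expanding $V_\tau U_2 = [\delta(t + x \cdot \omega - 2\tau) + v_\tau][\delta(t - x \cdot \omega) + u_2]$, the double-delta piece simplifies (Jacobian-$2$ change of variables) to $\tfrac{1}{2} \delta(t - \tau) \delta(x \cdot \omega - \tau)$ and contributes $\tfrac{1}{2} \int_{x \cdot \omega = \tau} p(x) \, dS_x$; the remaining three terms yield $\int_B p(x) K(x, \tau) \, dx$ for a smooth kernel $K$ assembled from $v_\tau$ and $u_2$. Writing $\rho(\tau) := \int_{x \cdot \omega = \tau} p \, dS_x \geq 0$, this gives the identity $\tfrac{1}{2} \rho(\tau) + \int_B p \, K(\cdot, \tau) \, dx = 0$. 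The causal support of $v_\tau$ (in $\{t + x \cdot \omega \geq 2\tau\}$) makes $K$ inherit a Volterra-type structure in $\tau$, and iterating from $\tau = -1$ (where $\rho = 0$) combined with $p \geq 0$ forces $\rho \equiv 0$; since $p \geq 0$ and its Radon transform in direction $\omega$ vanishes, $p \equiv 0$.

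For \emph{part (b)} I linearize at $q = 0$, where $u \equiv 0$. The Fréchet derivative of $q \mapsto \alpha(-\omega, \omega, \cdot)$ is the Born approximation obtained by applying the retarded fundamental solution of the free wave operator to the source $-q \, \delta(t - x \cdot \omega)$ and passing to the far-field limit:
\[
\alpha_{\mathrm{lin}}(q)(-\omega, \omega, s) \;=\; - \frac{1}{8 \pi} \int_{x \cdot \omega = -s/2} q(x) \, dS_x, \qquad s \in [0,2], \ \omega \in S.
\]
As $(\omega, s)$ varies this is, up to a constant, the full 3D Radon transform on planes meeting $B$, hence continuously invertible on a suitable Sobolev space of functions supported in $B$. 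Writing $\alpha(q_2) - \alpha(q_1) = \alpha_{\mathrm{lin}}(q_2 - q_1) + N(q_1, q_2)$, standard energy estimates for the difference equation together with the explicit form of $u_2$ from Theorem \ref{thm:forward}(a) give the quadratic bound $\|N(q_1, q_2)\|_Y \leq C \, (\|q_1\|_{C^2} + \|q_2\|_{C^2}) \, \|q_1 - q_2\|_X$ in appropriate norms. Combining $\alpha_1 = \alpha_2$ with continuous invertibility of $\alpha_{\mathrm{lin}}$ gives $\|q_1 - q_2\|_X \leq 2 C_1 C M \, \|q_1 - q_2\|_X$, so taking $M < 1/(2 C_1 C)$ forces $q_1 = q_2$.

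The main obstacle in (a) is the rigorous justification of the Green's identity when both $V_\tau$ and $U_2$ contain plane-wave $\delta$ singularities---in particular, interpreting $V_\tau p U_2$ and analyzing the $t \to +\infty$ boundary term---followed by the Volterra argument in which the sign condition $p \geq 0$ is genuinely needed to close the estimate on $\rho$. In (b), the main technical task is establishing the quadratic estimate on $N(q_1, q_2)$ uniformly in $\|q_i\|_{C^2} \leq M$ and matching the norms so that the Radon inversion and the nonlinear estimate are compatible.
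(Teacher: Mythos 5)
In part~(a) your choice of auxiliary solution creates a genuine gap. You take $V_\tau = \delta(t + x\cdot\omega - 2\tau) + v_\tau$ with $v_\tau$ \emph{causal}, supported in $\{t + x\cdot\omega \geq 2\tau\}$. With that choice the boundary term at $t \to +\infty$ in your Green's identity pairs two \emph{outgoing} waves, $w$ and $v_\tau$; its limit is an integral over the whole sphere of directions of the product of the two radiation fields, not merely the backscattered component of $w$, so it is not controlled by the hypothesis $\alpha_1(-\omega,\omega,\cdot)=\alpha_2(-\omega,\omega,\cdot)$ and cannot be read off from (\ref{eq:farfield}) in the single direction $-\omega$. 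The same causal choice destroys the Volterra structure you invoke: the cross term $\int p(x)\,\delta(t - x\cdot\omega)\, v_\tau(x,t)\,dx\,dt$ is supported where $t = x\cdot\omega \geq 2\tau - x\cdot\omega$, i.e.\ on the planes $x\cdot\omega \geq \tau$ (anti-Volterra), and the term $\int p\, v_\tau\, u_2$ extends over all large $t$ and samples $p$ on all of $B$ (its convergence at $t=+\infty$ is itself unaddressed). The paper's fix is precisely to use the \emph{time-reversed} solution $W_1(x,t) = U_1(x, 2\tau - t, \omega)$: it has the same plane-wave part $\delta(t + x\cdot\omega - 2\tau)$, but its scattered part is supported in $\{t \leq 2\tau - x\cdot\omega\}$, so at the finite time $t = 2\tau+1$ the adjoint solution is a \emph{pure} plane wave. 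Green's identity on the bounded slab $-1 \leq t \leq 2\tau + 1$ then evaluates the boundary term through (\ref{eq:DN}) as $4\pi\,\alpha(-\omega,\omega,-2\tau)$ and yields the exact identity (\ref{eq:q1q2iden}), whose kernel $k$ is supported in $-1 \leq x\cdot\omega \leq \tau$. Your double-delta coefficient $\tfrac12\rho(\tau)$ agrees with the paper's, and once the identity is corrected your closing step is the paper's: $p \geq 0$ lets one dominate the kernel term by $C\int_{-1}^{\tau} P(t,\omega)\,dt$ and Gronwall gives $P \equiv 0$, hence $p \equiv 0$.

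Your part~(b) is structurally the paper's argument: the exact identity (\ref{eq:q1q2iden}) \emph{is} the decomposition ``Radon transform plus quadratic remainder,'' and your linearized kernel (Radon transform on $x\cdot\omega = -s/2$, with the half-range $s \in [0,2]$ covering all planes through the evenness $P(-\tau,\omega)=P(\tau,-\omega)$, which you use implicitly) matches the paper's appendix. But the two quantitative ingredients you assert are exactly where the work lies. The paper does not invert the Radon transform abstractly on a Sobolev scale; it uses the Plancherel identity $\int_{\R^3}|p|^2 = \frac{1}{8\pi^2}\int_{-1}^{1}\int_S |P_\tau|^2\,d\omega\,d\tau$ to absorb the remainder directly in $L^2$. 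And your ``standard energy estimates'' for the quadratic bound on $N(q_1,q_2)$ conceal the real difficulty: what is needed is a sup-norm bound $\|u\|_* \lesssim \|q\|_{C^2}$ for a \emph{characteristic} (Goursat-type) initial value problem, for which standard energy methods do not directly apply; the paper devotes Proposition~\ref{prop:stability}, proved by a Green's-function/integral-equation argument (including determining $u_t$ on the characteristic plane from Theorem~\ref{thm:forward}(a)), to exactly this point. So (b) is salvageable along your lines, but only after supplying that lemma and matching the norms; as written, the estimate on $N$ is an assertion, and part~(a) fails as stated because of the causal choice of $V_\tau$.
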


The result (a) was proved in [St90]. Melrose and Uhlmann in [Uh01], [MU08] and Lagergren in Chapter 8 of  [La01], [La11] have
shown results analogous to (b), for a different norm, though the result in [La01], [La11] is for the Schr$\oml$dinger equation. We think our 
proof is simpler.
Analogous to (b), the articles [SU97],  
[Wa98], [Wae98], [Wam98], [Wa00], study the inverse backscattering problem but for the acoustic equation, Maxwell's equation or the 
equations of elasticity and prove injectivity or stability for the problem when the coefficients are close to a constant.

{\em Sadly, even the most basic question remains open: if for some smooth, compactly supported $q$, the backscattering data $\alpha(-\omega, \omega, s)=0$ for all $\omega \in S$ and all $s \in \R$, then is $q=0$?}

\subsection{History}\label{subsec:history}

The term ``scattering data'' has been used in at least five other contexts and we summarize the connections between them. 
Our scattering data
$\alpha(\tau, \theta, \omega)$ is very close to the scattering kernel $k_q(s, \theta, \omega)$ defined in the 
Lax-Phillips scattering theory;  one can show that (see [Uh01])
\[
-2 \pi \, k_q(s, \theta, \omega) = \alpha_{\tau \tau}(\theta, \omega, s) \qquad \forall s \in \R, ~ \theta \in S, ~
 \omega \in S.
\]

For each real number $k>0$ and unit vector $\omega \in S$, let $w(x,\omega,k)$ be the outgoing
 solution of the Helmholtz equation corresponding to the incoming wave 
$w_i(x,\omega,k) = e^{i k x\cdot \omega}$, that is $w$ is the solution of (below $\rho=|x|$)
\begin{gather}
(-\Delta_x + q(x) - k^2)w(x,\omega, k) = 0, \qquad x \in R^3
\label{eq:wpde}
\\
\lim_{\rho \rightarrow \infty} \rho \left ( \frac{\partial w_s}{\partial \rho}
- i k w_s \right)(x, \theta, k) ~=~0,
\label{eq:rad}
\end{gather}
where $w_s$ is the scattered part of the solution
\[
w_s(x,\omega,k) := (w - w_i)(x,\omega,k).
\]
For large $|x|$ 
\beqn
w_s(x,\omega, k) = \frac{e^{ik |x|}}{|x|}
w_\infty \left(\frac{x}{|x|}, \omega, k \right ) +
o\left (\frac{1}{|x|} \right)
\label{eq:ffp}
\eeqn
and the function $w_\infty(\theta, \omega, k)$, $\omega, \theta \in S$ and  $k>0$, is
called the far field pattern associated to $q(.)$ - see [CK13] for details.  One may show that $w_\infty(\theta, \omega, k)$
is a constant multiple of the Fourier transform of our time domain scattering data $\alpha(\theta,\omega,\tau)$ - see 
[Uh01]. 

The far field pattern $w_\infty(\theta, \omega,k)$ for unit vectors $\theta, \omega$ and $k>0$ determines a function
$h(\xi,\eta,k)$ for all $\xi, \eta \in \R^3$ with $|\xi|=|\eta| =k \neq 0$  via
\[
w_\infty( \theta, \omega, k) = h(k \theta, k \omega, k).
\]
In [ER92], by regarding $h$ as the solution of a certain integral equation, they extend $h(\xi, \eta, k)$ to a function on
$\R^3 \times \R^3 \times [0, \infty)$ - they even allow complex potentials $q$. They choose the function $h(\xi, -\xi, |\xi|))$ 
with $\xi \in \R^3$ as their backscattering data; loosely speaking, this corresponds to an extension of $w_\infty(- \omega, 
\omega, k)$ to the set $k \geq 0$ for all unit vectors $\theta, \omega$ - so $k=0$ is now included in the domain.

Another candidate for the scattering data is the standard scattering operator $S_q$ defined via wave operators arising from 
the solution operator of the initial value problem for the wave equation with zeroth order coefficient $q$ - see [Uh01].
If $T_q$ is the operator with kernel $k_q(s-s', \theta, \omega)$, then in [Uh01] it is shown that $S_q$ is the conjugate of 
$I+T_q$ by the  modified Lax-Phillips Radon transform.

In [BM09], [La01], [La11], the scattering operator is generated exactly as in the definition of $S_q$ above but with the use of 
the solution operator of the initial value problem for the Schr$\oml$dinger equation rather than the wave equation. In [La01], 
[La11] they do relate their scattering data to solutions of the wave equation and we have tried to use this connection to establish a 
relationship between their scattering data and our scattering data. We have come close to doing so but we have not succeeded fully so 
we have not included this part of our work in this article.

The inverse backscattering problem in the various contexts mentioned above consists of inverting the map sending $q$ to one of the 
above forms of the backscattering data. In [ER92] it was shown that the map $q(x) \to h(\xi, -\xi, |\xi|)$ is an analytic map in 
appropriate spaces and this map is an isomorphism on a dense open subset of this space (which includes a neighborhood of $q=0$). 
The articles [St92], [Uh01] (the details of [Uh01] are given in [MU08]) give analogous results for the map
$q(x) \to w_\infty(-\omega, \omega, k)$, [La01] has related results and [Wa02] has similar results for the even dimensional case. 
[Uh01], [MU08] go a little farther; there an explicit series expansion (the Born series expansion) is given for the backscattering 
map and the series is shown to be convergent for compactly supported potentials in $H^2(\R^3)$.

One may also study what can be recovered of $q$ from the backscattering data for a single frequency. Many different $q$ can result in 
the same backscattering data for a fixed frequency, but in [HKS05] it was shown that a certain subset (possibly empty) of $\R^3$, 
determined by the data, is guaranteed to be in the convex hull of the support of all such $q$.

A formal computation of the derivative of the map $q \to \alpha(-\omega, \omega, s)$, at $q=0$, shows (see subsection
\ref{subsec:born}) that this map is 
\[
p(x) \to  \frac{-1}{8 \pi} \int_{ x \cdot \omega = -2s} p(x) \, dS_x,
\]
which may be interpreted as saying that, for small $q$, we have
\[
\alpha(-\omega,\omega,s)    \approx    \frac{-1}{8 \pi} \int_{ x \cdot \omega = -2s} p(x) \, dS_x.
\]
Hence, using the inverse of the Radon transform, a candidate to approximate $q(x)$ (constructed from backscattering data) 
would be
\[
q_b(x)  := \frac{1}{4\pi} \int_S \alpha_{ss}(-\omega, \omega, - x \cdot \omega/2) \, d \omega,
\]
which is called the Born approximation to $q$. 
This is identical and analogous to what is done in the frequency domain case, where the derivative of the map
$q \to w_\infty( - \omega, \omega, k)$, at $q=0$, is the map 
\[
p(x) \to \hat{p}(2k \omega),
\]
so the Born approximation is defined as
\[
q_b(x) := \int_0^\infty \int_S  k^2 e^{-ikx \cdot \omega}  w_\infty( -\omega, \omega, k/2) \, d \omega \, dk.
\]

If a plane wave impinges on a potential $q$ which has a singularity across a surface, the transmitted wave is the same as the
 original but the 
reflected wave is one degree smoother. So if a medium is probed by a plane wave then the resulting wave is a sum of the original 
wave plus a sum of waves which are the result of one or more reflections. Amongst the reflected waves, the waves resulting from
 a single reflection will be the most 
singular, those resulting from two reflections will be one degree smoother, those resulting from three reflections will be two degrees 
smoother and so 
on. Since $q_b$ is the result of applying the inverse of the single reflection process to the backscattering data, one expects $q_b$ and 
$q$ to the have the same principal singularity. This idea was implemented in [GU93] to show that if $q$ is a conormal potential then
one can recover the conormal singularities of $q$ from the singularities of the backscattering amplitude. Using tools from Harmonic 
Analysis, in [OPS01] for two dimensions and then in [RV05] for three dimensions, it was shown that for {\em arbitrary} (not 
necessarily conormal) smooth enough 
$q$, $q_b-q$ is smoother than $q$, that is $q_b$ captures the principle singularities of $q$. Please see [RR12] for an accurate 
statement of the most recent results - also see [BM09] for related results. Along these lines, [DUV] has a result for the (harder to 
analyze) acoustic equation. There it was shown that the reflected wave is smoother than the transmitted wave for conormal sound 
speeds in 
$C^{1+\ep}(\R^2)$ with $\ep>0$.

In [Ramm10] it is claimed that the map $q \to w_\infty(-\omega, \omega, k)$ is injective when $q$ is restricted to compactly 
supported real valued $q$. However, there is a gap in the proof. The complex geometrical optics (CGO) solutions are defined for 
complex $k$ but they are not analytic in $k$. The classical scattering solutions can be defined for complex $k$ and are analytic in $k$ 
but they are different from the CGO solutions. In [Ramm10] it is assumed that they are the same. 

Below $\cleq$ denotes `less than or equal to a constant multiple' with the constant independent of the parameters.


\section{Proof of Theorem \ref{thm:forward}}\label{sec:forward}
\begin{enumerate}
\item[(a)]
We seek $U(x,t,\omega)$ in the form
\[
U(x,t,\omega) = \delta(t-x \cdot \omega) + u(x,t,\omega) H(t-x \cdot \omega);
\]
then
\[
U_{tt} - \Delta U + q U = ( u_{tt} - \Delta u + qu) H(t - x \cdot \omega)
+ 2 (u_t + \omega \cdot \nabla u + q/2) \delta(t-x \cdot \omega).
\]
So we need to choose a smooth $u(x,t)$ on the region $ t \geq x \cdot \omega$ so that $u_{tt} - \Delta u + qu =0$ on this region and $u_t + \omega \cdot \nabla u =-q/2$ on the plane $x \cdot \omega = t$. The last relation is equivalent to
\[
\frac{d}{d\sigma} u (x + \sigma \omega, x \cdot \omega + \sigma, \omega) 
= -\frac{ q(x + \sigma \omega)}{2}
\]
and integrating it with respect to $\sigma$ and noting that $u(x,t, \omega) =0$ for $t<-1$, we obtain
\[
u(x, x \cdot \omega, \omega) = - \frac{1}{2} \int_{-\infty}^0 q(x + \sigma \omega) \, d \sigma.
\]
So we need to show that the characteristic IVP (\ref{eq:ude})-(\ref{eq:uic}) has a unique smooth solution. The uniqueness of the solution may be proved by standard energy estimates. The existence is proved by using a progressing wave expansion and converting the problem to the solution of an initial value problem.

We give an outline of the proof - the details can be filled in quite easily. Below, for any $j \geq 0$,
\[
s_+^j = \begin{cases} s^j, & s \geq 0 \\ 0, & s<0. \end{cases};
\]
note that $s_+^0 = H(s)$. Pick any positive integer $N$; we seek a solution $U(x,t)$ of (\ref{eq:Ude}), (\ref{eq:Uic}) in the form of a progressing wave expansion
\[
U(x,t) = \delta(t -x \cdot \omega) + \sum_{j=0}^N a_j(x) (t- x \cdot \omega)_+^j + R_N(x,t).
\]
The $a_j(x)$ are constructed by solving the associated transport equations - see [CH89].
One may show that the $a_j(x)$ are smooth functions completely determined by $q$ and its derivatives of order $j+2$ or less and $R_N(x,t)$ is the solution of the IVP
\begin{gather}
(\Box - q ) R_N(x,t)  = F(x) (t - x \cdot \omega)_+^N, \qquad (x,t) \in \R^3 \times ]R,
\label{eq:RNde}
\\
R_N(x,t) =0, \qquad x \in \R^3, ~ t < -1,
\label{eq:RNic}
\end{gather}
for some smooth function $F(x)$ completely determined by $q$ and its derivatives up to order $2N$. Since the RHS of
(\ref{eq:RNde}) is of class $C^{N-1}$ on $\R^3 \times \R$, by the well-posedness theory for hyperbolic PDEs (using integral equation arguments), the system (\ref{eq:RNde}), (\ref{eq:RNic}) has a unique solution of class $C^{N-1}$. Next one may check that
\[
u(x,t) =  \sum_{j=0}^N a_j(x) (t- x \cdot \omega)^j + R_N(x,t), \qquad t \geq x \cdot \omega
\]
solves (\ref{eq:ude}) - (\ref{eq:uic}), so we have proved the existence of a $u$ of class $C^{N-1}$ for every $N$. The uniqueness of $u$ allows us to claim that $u$ is smooth on $t \geq x \cdot \omega$.

\item[(c)] We prove (c) before (b) because the proof of (b) is more complicated.
We shorten $u(x,t,\omega)$ to $u(x,t)$, assume that $\theta = (0,0,1)$, write $x = (x',z)$ with $x' \in \R^2$, $z \in \R$, and define  
\[
v(z,t) := \int_{\R^2} u(x',z,t) \, dx'.
\]
Since $u_{tt} - \Delta u = 0$ in the region $|x| \geq 1$ and $u(x,t) = 0$ for $t \leq -1$, in the region
$z \geq 1$ we have
\[
v_{tt} - v_{zz} = \int_{\R^2} (u_{tt} - u_{zz})(x',z,t) \, dx' = - \int_{\R^2} (\Delta_{x'} u)(x',z,t) \, dx'
=0,
\]
and $v(z,t)=0$ for $t \leq -1$. Hence $v(z,t) = f(t-z)$, on the region $z \geq 1$, for some function $f$. Hence, for $z \geq 1$, we have
\begin{align*}
\int_{\R^2} u(x',z,t) \, dx' = v(z,t) = f(t-z) = v(1, t-z+1) = \int_{\R^2} u(x', 1, t-z+1) \, dx'
\end{align*}
proving one part of (c).
Next
\[
\int_{x \cdot \theta=1} (\theta \cdot \nabla u)(x,t) \, dS_x = 
\int_{\R^2} u_z(x',1,t) \, dx' = v_z(1,t) = - f'(t-1);
\]
and
\begin{align*}
\pa_t  \left ( \int_{x \cdot e=1} u(x,t) \, dS_x \right ) & = 
\pa_t \left ( \int_{\R^2} u(x',1,t) \, dx' \right )= \pa_t ( v(1,t) ) = \pa_t ( f(t-1) )
\\
& = f'(t-1) =  - \int_{-\infty}^t v_z(1,\tau) \, d \tau,
\end{align*}
proving the other part of (c).

\item[(b)]
As before, we shorten $u(x,t,\omega)$ to $u(x,t)$, assume that $\theta = (0,0,1)$, and write $x = (x',z)$ with 
$x' \in \R^2$, $z \in \R$.
Let $f(x',t) = u_z(x',z=1,t)$, that is $f$ is the 
value of $u_z$ on the hyperplane $x \cdot \theta = 1$. We note two properties of the distribution $f(x',t)$; we have
 $f(x',t)=0$ for all $t \leq -1$ and
for each $T$, the intersection of the support of $f$ with the region $t \leq T$ is compact.

Now $u$ is the solution of the IBVP(Initial Boundary Value Problem)
\begin{align}
u_{tt} - \Delta u = 0, &\qquad (x,t) \in \R^3 \times \R, ~~ z \geq 1,
\label{eq:ibde}
\\
u(x,t) = 0, &\qquad t <-1,
\label{eq:ibic}
\\
u_z(x',1,t) = f(x',t), &\qquad (x',t) \in \R^2 \times \R.
\label{eq:ibbc}
\end{align}
We may show that (see [Rak03] for example)
\begin{align*}
u(x',z+1,t) &=  - \frac{1}{2 \pi} \frac{ \delta( t - |(x',z)|)}{|(x',z)|} * f(x',t)
= - \frac{1}{2 \pi} \int_{R^2 \times \R} f(y',\tau) \frac{ \delta( t-\tau - | (x'-y',z)|) }{ |(x'-y',z)|} \, dy' \, d \tau
\\
&= - \frac{1}{2 \pi} \int_{\R^2} \frac{ f(x'+y', t - |(y',z)|)}{|(y',z)|} \, dy'
\end{align*}
as  a distribution in $(x',z,t)$ on the region $\R^2 \times (0, \infty) \times \R$. Hence using the pullback of the map
$(z,\sigma) \to (0,z,z+\sigma)$ we have
\[
z \, u(0,z+1,z+\sigma) = - \frac{z}{2 \pi}  \int_{\R^2} \frac{ f(y', z+\sigma - |(y',z)|)}{|(y',z)|} \, dy'.
\]
as a distribution in $(z, \sigma)$ on the region $(0,\infty) \times \R$. This distribution may also be regarded as a continuous map from
$(0, \infty)$ to $\D'(\R)$ (the distributions on $\R$) sending
\[
z \to - \frac{z}{2 \pi}  \int_{\R^2} \frac{ f(y', z+\sigma - |(y',z)|)}{|(y',z)|} \, dy'
\]
because for any compactly supported smooth function $\phi(\sigma)$ on $\R$
\begin{align*}
- \frac{z}{2 \pi} \int_\R \int_{\R^2} \frac{ f(y', z+\sigma - |(y',z)|)}{|(y',z)|}  \, \phi(\sigma) \, dy' \, d \sigma
 =
 - \frac{z}{2 \pi} \int_\R \int_{\R^2}  f(y', \sigma) \, \frac{ \phi( \sigma + |(y',z)| - z )}{ |(y',z)|}   \, dy' \, d \sigma
\end{align*}
is a 
continuous\footnote{The``integration'' in the last integral is over a compact region region in
$y',\sigma$ space determined by the support of $\phi$ and that $f(\cdot,t)=0$ for $t<-1$. So the map $z \to
\frac{ \phi( \sigma + |(y',z)| - z )}{ |(y',z)|}$ is a continuous map from $(0,\infty)$ to the space of test functions in the $y',\sigma$ variables.}
function of $z$ on $(0,\infty)$.

Now we show that, as distributions in $\sigma \in \R$,
\beqn
\lim_{z \rightarrow \infty} 
z \int_{\R^2} \frac{ f(y', z+\sigma - |(y',z)|)}{|(y',z)|} \, dy'
 =  \int_{\R^2} f(y',\sigma) \, dy'
\label{eq:ff}
\eeqn
which will imply that
\[
\lim_{z \rightarrow \infty}  z u(0,z+1,z+\sigma) =  - \frac{1}{2 \pi} \int_{\R^2} f(y',\sigma) \, dy',
\qquad
 \lim_{z \rightarrow \infty}  u(0,z+1,z+\sigma)  = 0
\]
and hence taking $\sigma=1-s$ we have
\[
\lim_{z \rightarrow \infty} (z+1) \, u(0,z+1,z+1-s) =  - \frac{1}{2 \pi} \int_{\R^2} f(y',1-s) \, dy',
\]
proving (b). So it remains to prove (\ref{eq:ff}).

For any compactly supported smooth function $\phi(\sigma)$ on $\R$, we have 
\beqn
z \int_\R \int_{\R^2} \frac{ f(y', z+\sigma - |(y',z)|)}{|(y',z)|} \, \phi(\sigma) \, dy' \, d \sigma
=
\int_\R \int_{\R^2} f(y',t) \,  \phi( t + |(y',z)| - z) \, \frac{z}{|(y',z)|} \, dy' \, dt.
\label{eq:temp3}
\eeqn
If $\phi$ is supported on $|\sigma| \leq R$ then the ``integration'' occurs on a subset of the region $ t + |(y',z) - z \leq R$ and hence
on a subset of $t \leq R$. Since the intersection of the support of $f$ with the region $t \leq R$ is compact, the ``integration'' in
(\ref{eq:temp3}) occurs on a compact subset of $\R \times \R^2$.  Next, noting that 
\[
| (y',z)| - z = \frac{|y'|^2}{ |(y',z)| + z} \leq \frac{|y'|^2}{z}
\]
one can show that as $z \to \infty$ we have $ z/|(y',z)| \to 1$ in the $C^k$ norm on compact subsets of $\R^2$,
 for all $k \geq 0$. 
Further,
from the mean value theorem, for any smooth, compactly supported function $\psi(t)$ on $\R$ we have
\[
|\psi( t + |(y',z)| - z) - \psi(t)| \leq M ( |y',z| - z)
\]
so that as $z \to \infty$ we have $ \phi( t + |(y',z)| - z) \to \phi(t)$ in the $C^k$ norm on compact subsets of $\R^2 \times \R$, for all $k \geq 0$. Hence (\ref{eq:ff}) follows from the continuity property of distributions.
\end{enumerate}


\section{A useful identity}

We derive an identity used in the proofs of Theorems 2 and 3.

Let $U_i$, $i=1,2$, be the solution of (\ref{eq:Ude}), (\ref{eq:Uic}) when $q=q_i$, and 
let $\alpha_i$ be the far field pattern associated to $q_i$. Define $v := U_1 - U_2
= u_1 - u_2$, $p := q_2 - q_1$ and $\alpha:= \alpha_1 - \alpha_2$; then
\begin{align}
v_{tt} - \Delta u + q_1 v = p U_2, & \qquad (x,t) \in \R^3,
\label{eq:vde}
\\
v(x,t) = 0, & \qquad x \in \R^3, ~t \leq -1.
\label{eq:vic}
\end{align}
We show that $v$ and $\alpha$ satisfy the following identity.
\begin{proposition} \label{prop:identity}
For any $\tau \in \R$ and all $\omega \in S$ we have
\beqn
8 \pi  \alpha(-\omega, \omega,- 2 \tau ) = \int_{x \cdot \omega = \tau} p(x) \, dS_x
+ \int_{-1}^\tau \int_{x \cdot \omega = t}  k(x, \omega, \tau) \, p(x) \, dS_x \, dt,
\label{eq:q1q2iden}
\eeqn
 where
\[
k(x, \omega, \tau) := 2 (u_1 + u_2)(x, 2 \tau- x \cdot \omega, \omega) + 
2 \int_{x \cdot \omega}^{2 \tau - x \cdot \omega}  u_1(x,s, \omega) \,
u_2(x, 2 \tau -s, \omega) \, ds,
\]
 is smooth on the region $ -1 \leq x \cdot \omega \leq \tau$ with $\tau \in \R$, $\omega \in S$ and $x \in \R^3$.
\end{proposition}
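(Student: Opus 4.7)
The plan is to establish the identity by a Santosa–Symes style duality. Introduce the test wave
\[
W(x,t) := U_2(x,\,2\tau-t,\,\omega),
\]
which by time-reversibility solves $W_{tt} - \Delta W + q_2 W = 0$, and set
\[
I(t) := \int_{\R^3}\bigl[\,v(x,t)\,W_t(x,t) - v_t(x,t)\,W(x,t)\,\bigr]\,dx.
\]
Direct differentiation in $t$, combined with the PDEs for $v$ and $W$ and the relation $v + U_2 = U_1$, gives $I'(t) = -\int_{\R^3} p(x)\,W(x,t)\,U_1(x,t,\omega)\,dx$, provided the spatial flux $\int\nabla\cdot(v\nabla W - W\nabla v)\,dx$ vanishes. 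I would justify the latter by splitting $W$ into its plane-wave $\delta$-part and smooth part: the smooth part $u_2(x,2\tau-t,\omega)\,H(\cdots)$ has compact $x$-support at each fixed $t$ from the finite-propagation structure of the characteristic IVP of Theorem~\ref{thm:forward}(a), while the $\delta$-part $\delta(2\tau - t - x\cdot\omega)$ reduces the flux to an integral of $\Delta^{\perp}v$ on the plane $x\cdot\omega = 2\tau-t$, which vanishes because $v$ has compact $\omega^{\perp}$-support on every such plane.

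Since $v \equiv 0$ for $t < -1$, we have $I(-\infty) = 0$, and integrating gives
\[
I(T) \;=\; -\int_{-\infty}^T\!\!\int_{\R^3} p(x)\,U_1(x,t,\omega)\,U_2(x,2\tau-t,\omega)\,dx\,dt.
\]
To compute $\lim_{T\to\infty}I(T)$, observe that for $T > 2\tau+1$ the smooth part of $W(\cdot,T)$ vanishes, so $W(x,T) = \delta(2\tau - T - x\cdot\omega)$; a direct distributional pairing then yields
\[
I(T) \;=\; -\int_{x\cdot\omega=2\tau-T}\!\bigl[\,\omega\cdot\nabla v + v_t\,\bigr](x,T,\omega)\,dS_x.
\]
Since $v$ satisfies the free wave equation outside $B$, the transport identity (\ref{eq:transport}) applied with $\theta=-\omega$ shifts this integral to the plane $x\cdot\omega = -1$ at time $1+2\tau$, and (\ref{eq:DN}) identifies $\int v_t\,dS$ with $\int\omega\cdot\nabla v\,dS$ there. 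Comparing with the defining formula of $\alpha$ from Theorem~\ref{thm:forward}(b) shows $I(T) = -4\pi\,\alpha(-\omega,\omega,-2\tau)$ for all sufficiently large $T$, so that
\[
\int_{\R^3\times\R} p(x)\,U_1(x,t,\omega)\,U_2(x,2\tau-t,\omega)\,dx\,dt \;=\; 4\pi\,\alpha(-\omega,\omega,-2\tau).
\]

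It remains to unfold the left-hand side. Substituting $U_i = \delta(t-x\cdot\omega) + u_i(x,t,\omega)\,H(t-x\cdot\omega)$ and integrating out $t$ against the resulting $\delta$-factors yields four contributions: the $\delta\!\cdot\!\delta$ term gives $\tfrac12\!\int_{x\cdot\omega=\tau}p\,dS_x$; the two mixed $\delta\!\cdot\!u$ terms combine to $\int_{-1}^{\tau}\!\!\int_{x\cdot\omega=t}p\,(u_1+u_2)(x,\,2\tau-x\cdot\omega,\,\omega)\,dS\,dt$; and the $u\!\cdot\!u$ term contributes $\int_{-1}^{\tau}\!\!\int_{x\cdot\omega=t}p\int_{x\cdot\omega}^{2\tau-x\cdot\omega}\!u_1(x,s,\omega)\,u_2(x,2\tau-s,\omega)\,ds\,dS\,dt$. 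Twice this sum equals $\int_{x\cdot\omega=\tau}p\,dS_x + \int_{-1}^{\tau}\!\!\int_{x\cdot\omega=t}k\,p\,dS\,dt$, so multiplying by $8\pi$ delivers the claimed identity; smoothness of $k$ on the region $-1 \le x\cdot\omega \le \tau$ is immediate from the smoothness of $u_1,u_2$ on $t \geq x\cdot\omega$. The main obstacle is the spatial boundary analysis in the derivation of $I'(t)$—in particular the delicate cancellation coming from the $\delta$-part of $W$—together with the careful justification that (\ref{eq:transport}) applies to derivatives of $v$ on planes receding to infinity in the $-\omega$ direction.
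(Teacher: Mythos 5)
Your proposal is correct and is essentially the paper's own argument: a time-reversed solution paired with $v$ in a spacetime integration by parts (the Santosa--Symes duality), identification of the resulting boundary term with $4\pi\,\alpha(-\omega,\omega,-2\tau)$ via Theorem \ref{thm:forward}(b,c), and then the identical four-term unfolding of $\int\!\!\int p\,U_1(x,t)\,U_2(x,2\tau-t)\,dx\,dt$ with the same support analysis producing $k$. The only cosmetic differences are that the paper time-reverses $U_1$ rather than $U_2$ (so both $v$ and the test wave carry the potential $q_1$, no cross term arises, and your step $v+U_2=U_1$ is not needed) and it integrates over the slab $t\in[-1,2\tau+1]$, stopping exactly at $t=2\tau+1$ where the $\delta$-front of the reversed wave sits on the plane $x\cdot\omega=-1$, so your detour through a receding plane and the transport identity (\ref{eq:transport}) is avoided.
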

\begin{proof}
Since the $q_i$ are supported in the unit ball, both sides of (\ref{eq:q1q2iden}) are zero if $\tau < -1$, so we focus on the $\tau \geq -1$ case. Choose any $\tau \geq -1 $ and define $W_1(x,t) = U_1(x, 2 \tau - t, \omega)$; then $W_1$ satisfies (\ref{eq:Ude}) with $q$ replaced by $q_1$ and noting that $U_1(x,t, \omega) = \delta(t- x \cdot \omega)$ for $ t \leq -1$ we have
\[
W_1(x,t) = \delta( 2 \tau  -t - x \cdot \omega) \qquad \text{for} ~ t \geq 2 \tau + 1.
\]
Noting that for each $t \in [-1, 2 \tau+1]$, $v(x,t)$ is compactly supported as a function of $x$,
working formally (which can be made rigorous by integrating 
$p(x) \, u_2(x,t, \omega) \, u_1(x, 2 \tau-t, \omega)$ over the region 
$x \cdot \omega \leq t \leq 2 \tau - x \cdot \omega$) we have
\begin{align}
\int_{ \R^3} \int_{-1}^{2 \tau + 1} & p(x) \, U_2(x,t, \omega) \, W_1(x,t) \, dt \, dx
= \int_{\R^3} \int_{-1}^{2 \tau + 1} (v_{tt} - \Delta v + q_1 v)(x,t) \, W_1(x,t) \, dt \, dx
\nn
\\
& = \int_{\R^3} (v_t W_1 - W_{1t}v)(x, 2 \tau+1) \, dx
-  \int_{\R^3} (v_t W_1 - W_{1t} v)(x, -1) \, dx 
\nn
\\
& = \int_{\R^3} v_t(x, 2 \tau+1) \, \delta( -1 - x \cdot \omega ) \, dx
+ \int_{\R^3} v(x, 2 \tau+1) \, \delta'(-1 - x \cdot \omega) \, dx
\nn
\\
& = \int_{x \cdot \omega = -1} v_t(x, 2 \tau + 1) \, dS_x
- \int_{\R^3} v(x, 2 \tau + 1) \,  ( \omega \cdot \nabla) ( \delta( -1 - x \cdot \omega ) )  \, dx
\nn
\\
& = \int_{x \cdot \omega = -1} v_t(x, 2 \tau + 1) \, dS_x
+ 
\int_{x \cdot \omega = -1} ( \omega \cdot \nabla v)(x, 2 \tau + 1) \, dS_x
\nn
\\
& = 4 \pi \alpha(-\omega, \omega, -2 \tau)
\label{eq:ttuse}
\end{align}
with the last step following from (\ref{eq:DN}) of Theorem \ref{thm:forward} with $\theta= - \omega$ and the definition of $\alpha$.
We now analyze the LHS of (\ref{eq:ttuse}). For $\tau \geq -1$, we have
\begin{align*}
\int_{ \R^3} \int_{-1}^{2 \tau + 1}  &  p(x) \, U_2(x,t, \omega) \, W_1(x,t) \, dt \, dx
= \int_{ \R^3} \int_{-1}^{2 \tau + 1} p(x) \, U_2(x, t, \omega) \, U_1(x, 2 \tau -t, \omega) \, dt \, dx
\\
& =  \int_{ \R^3} \int_{-1}^{2 \tau + 1} p(x) \, \delta( t - x \cdot \omega) \, \delta( 2 \tau -t - x \cdot \omega) \, dt \, dx
\\
& \qquad+ \int_{ \R^3} \int_{-1}^{2 \tau + 1} p(x) \, \delta( t - x \cdot \omega) \, u_1(x, 2 \tau -t, \omega)
 \, dt \, dx
\\
& \qquad+ \int_{ \R^3} \int_{-1}^{2 \tau + 1} p(x) \, u_2(x,t, \omega) \, \delta( 2 \tau -t - x \cdot \omega) \, dt \, dx
\\
& \qquad+
\int_{ \R^3} \int_{-1}^{2 \tau + 1} p(x) \, u_1(x,2 \tau-t, \omega) \, u_2(x, t, \omega)  \, dt \, dx.
\end{align*}
The first of these four integrals on the RHS is the Radon transform of $p$. In the second integral, 
using the support of $u_1$, the region of integration is
$ x \cdot \omega \leq 2 \tau -t$ where $x \cdot \omega = t$ and hence $t \leq \tau$. In the third integral
 the region of integration is $ x \cdot \omega \leq t$ where $ x \cdot \omega
= 2 \tau -t$ and hence $ \tau \leq t$. In the fourth integral the region of integration is
$ x \cdot \omega \leq t$ and $x \cdot \omega \leq 2 \tau -t$ so adding these two we get $x \cdot \omega \leq \tau$. Hence
\begin{align}
\int_{ \R^3} \int_{-1}^{2 \tau + 1}  &  p(x) \, U_2(x,t, \omega) \, W_1(x,t) \, dt \, dx
= \frac{1}{2} \int_{ x \cdot \omega = \tau} p(x) \, dS_x
+  \int_{-1}^\tau  \int_{x \cdot \omega = t} p(x) u_1(x, 2 \tau -t, \omega) \, dS_x \, dt
\nn
\\
&  \qquad + \int_{\tau}^{2\tau+1} \int_{ x \cdot \omega = 2 \tau -t} p(x)  \, u_2(x, t, \omega) \, dS_x \, dt
\nn
\\
& \qquad  + \int_{-1 \leq x \cdot \omega \leq \tau} \int_{x \cdot \omega}^{2 \tau - x \cdot \omega}
p(x) \, u_1(x,t, \omega) \, u_2(x, 2 \tau -t, \omega)  \, dt \, dx
\nn
\\
& = \frac{1}{2} \int_{ x \cdot \omega = \tau} p(x) \, dS_x
+
 \int_{-1}^\tau  \int_{x \cdot \omega = t} p(x) \, (u_1+u_2)(x, 2 \tau -t, \omega) \, dS_x \, dt
\nn
\\
& \qquad  + \int_{-1 \leq x \cdot \omega \leq \tau} \int_{x \cdot \omega}^{2 \tau - x \cdot \omega}
p(x) \, u_1(x,s, \omega) \, u_2(x, 2 \tau -s, \omega)  \, ds \, dx
\nn
\\
& =   \frac{1}{2} \int_{ x \cdot \omega = \tau} p(x) \, dS_x
+
 \int_{-1}^\tau  \int_{x \cdot \omega = t} p(x) \, (u_1+u_2)(x, 2 \tau -t, \omega) \, dS_x \, dt
\nn
\\
& \qquad + \int_{-1}^\tau \int_{x \cdot \omega = t} \int_{x \cdot \omega} ^{2 \tau - x \cdot \omega} p(x) \, u_1(x,s, \omega) \,
u_2(x, 2 \tau -s, \omega) \, ds \, dS_x \, dt
\nn
\\
& = \frac{1}{2} \int_{ x \cdot \omega = \tau} p(x) \, dS_x 
+ \frac{1}{2} \int_{-1}^\tau \int_{x \cdot \omega = t} p(x) \, k(x, \omega, \tau) \, dS_x \, dt
\label{eq:tempiden}
\end{align}
for all unit vectors $\omega$ and all $\tau \geq -1$; here
\[
k(x, \omega, \tau) := 2 (u_1 + u_2)(x, 2 \tau- x \cdot \omega, \omega) + 
2 \int_{x \cdot \omega}^{2 \tau - x \cdot \omega}  u_1(x,s, \omega) \,
u_2(x, 2 \tau -s, \omega) \, ds 
\]
in the region $\omega \in S$, $-1 \leq  \tau$ and $ x \cdot \omega \leq \tau$. Note that, in this region, $k(x,\omega, \tau)$ depends on the values of values of $u_1( \cdot, \cdot, \omega)$, and
$u_2(\cdot, \cdot, \omega)$ at points $(x',t')$ where
$t' \geq x' \cdot \omega $ because, in this region, for the first term $2 \tau - x \cdot \omega \geq x \cdot \omega$ and in the integral, $s \geq x \cdot \omega$ and $ 2 \tau- s \geq x \cdot \omega $. Hence $k(x,\omega, \tau)$ is a smooth function on this region. Combining (\ref{eq:ttuse} ) and (\ref{eq:tempiden}) we obtain
\[
8 \pi  \alpha(-\omega, \omega,- 2 \tau )  = \int_{ x \cdot \omega = \tau} p(x) \, dS_x 
+ \int_{-1}^\tau \int_{x \cdot \omega = t} p(x) \, k(x, \omega, \tau) \, dS_x \, dt
\]
which proves the proposition.
\end{proof}


\section{Proof of Theorem \ref{thm:backscatter}}



\subsection{ An expansion}
For $x \in \R^n$, define the vectors
\[
T_{ij} = x_i e_j - x_j e_i, \qquad  ~ i,j=1, \cdots,n,
\]
which are tangential, at $x$, to the origin centered sphere through $x$; here $e_i$ is the unit vector
along the $x_i$ axis. Note that 
\[
\Omega_{ij} := x_i \pa_j - x_j \pa_i  = T_{ij} \cdot \nabla.
\]
For any vector $v$ in $\R^n$, we express $v$ in terms of $x$ and the $T_{ij}$.
\begin{proposition}\label{prop:vperp}
For any $x,v \in \R^n$, we have
\beqn
|x|^2 v = \sum_{i < j} (v \cdot T_{ij}) T_{ij} + (v \cdot x) x.
\label{eq:vxT}
\eeqn
\end{proposition}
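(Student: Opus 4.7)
The identity is an elementary linear-algebra fact, and the plan is to prove it componentwise. The structural observation that organizes the calculation is that each $T_{ij}$ is orthogonal to $x$, since $T_{ij} \cdot x = x_i x_j - x_j x_i = 0$. Consequently $\sum_{i<j}(v \cdot T_{ij}) T_{ij}$ automatically lies in $x^\perp$, while $(v \cdot x) x$ lies along $x$. So the right-hand side of (\ref{eq:vxT}) is already an orthogonal decomposition, and I only need to check that the part along $x$ recovers the component of $|x|^2 v$ parallel to $x$ (which is immediate: $(v \cdot x) x$ equals $|x|^2$ times the projection of $v$ onto $x/|x|$) and that the tangential sum recovers $|x|^2$ times the component perpendicular to $x$.

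For the tangential part, I would rewrite (\ref{eq:vxT}) as $\sum_{i<j}(v \cdot T_{ij}) T_{ij} = |x|^2 v - (v \cdot x) x$ and compute the $k$-th component of the left-hand side using the identities $(T_{ij})_k = x_i \delta_{jk} - x_j \delta_{ik}$ and $v \cdot T_{ij} = x_i v_j - x_j v_i$. Only pairs $(i,j)$ with $k \in \{i,j\}$ contribute, so the $k$-th component of the left-hand side equals
$$\sum_{j > k}\bigl(x_j^2 v_k - x_k x_j v_j\bigr) \;+\; \sum_{i < k}\bigl(x_i^2 v_k - x_k x_i v_i\bigr) \;=\; \sum_{j \neq k}\bigl(x_j^2 v_k - x_k x_j v_j\bigr).$$
Collecting terms, this simplifies to $v_k(|x|^2 - x_k^2) - x_k(v \cdot x - x_k v_k) = |x|^2 v_k - x_k (v \cdot x)$, which is precisely the $k$-th component of $|x|^2 v - (v \cdot x) x$, completing the verification.

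There is no real obstacle. The only bookkeeping step worth doing carefully is merging the $i=k$ and $j=k$ sums into a single sum over $j \neq k$, which works by relabeling the dummy index and by the fact that the omitted $j=k$ diagonal term corresponds to $T_{kk}=0$ and would have vanished anyway. Once the $k$-th component identity holds for every $k$, the proposition follows.
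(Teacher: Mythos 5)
Your proposal is correct and takes essentially the same approach as the paper: both proofs verify the identity componentwise, computing the $k$-th coordinate of $\sum_{i<j}(v\cdot T_{ij})T_{ij}$ by isolating the pairs with $i<k$ and $j>k$ and merging them into a single sum (your $\sum_{j\neq k}(x_j^2 v_k - x_k x_j v_j)$ is exactly the paper's $\sum_i (v\cdot T_{ik})x_i$ after expansion), then simplifying to $|x|^2 v_k - x_k(v\cdot x)$. Your opening observation that $T_{ij}\perp x$ gives an orthogonal decomposition is a pleasant organizing remark but does not alter the underlying calculation.
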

\begin{proof}
Let $v = ( v_1, \cdots, v_n)$ and $x = (x_1, \cdots, x_n)$; then taking the dot product of the RHS (\ref{eq:vxT}) with $e_k$ we obtain
\begin{align*}
e_k \cdot ( \text{RHS of (\ref{eq:vxT}}) )  & =
\sum_{i<j}  (v \cdot T_{ij}) (T_{ij} \cdot e_k ) + ( v \cdot x) x_k 
\\
& = \sum_{i<k} (v \cdot T_{ik}) (T_{ik} \cdot e_k ) +  \sum_{k<j} (v \cdot T_{kj}) (T_{kj} \cdot e_k ) +  ( v \cdot x) x_k 
\\
&=  \sum_{i<k} (v \cdot T_{ik}) x_i -  \sum_{k<j} (v \cdot T_{kj}) x_j +  ( v \cdot x) x_k 
\\
& =  \sum_{i<k} (v \cdot T_{ik}) x_i +  \sum_{k<j} (v \cdot T_{jk}) x_j +  ( v \cdot x) x_k 
\\
& = \sum_{i} ( v \cdot T_{ik} ) x_i +  ( v \cdot x) x_k 
\\
& = \sum_i (v_k x_i - v_i x_k) x_i + (v \cdot x) x_k
\\
& = v_k |x|^2.
\end{align*}
\qquad
\end{proof}

\subsection{ A derivative of the Radon transform}

For each $\tau \in \R$ and $\omega \in S$ and any smooth function $p(x)$ on $\R^3$ supported in $B$, we define the Radon transform
\[
P(\tau, \omega) := \int_{x \cdot \omega = \tau} p(x) \, dS_x.
\]
Hence, by the Divergence theorem
\[
P(\tau, \omega) = \int_{ x \cdot \omega \leq \tau} (\omega \cdot \nabla p)(x) \, dx
= \int_{-\infty}^\tau \int_{x \cdot \omega =t} (\omega \cdot \nabla p)(x) \, dS_x \, dt
\]
so (the $\tau$ partial derivative of $P$)
\beqn
P_\tau(\tau, \omega) = \int_{x \cdot \omega =\tau} (\omega \cdot \nabla p)(x) \, dS_x.
\label{eq:Pt}
\eeqn

Given $\omega \in S$ and $\tau \in [0,1]$, to any point $x \in \R^3$ on the plane $x \cdot \omega = \tau$ we associate $\rho=|x|$, and $(r,\theta)$ the polar coordinates of $x$ as points on the plane
$x \cdot \omega = \tau$ - so $r$ is the distance of $x$ from the line through the origin in the direction $\omega$; see Figure \ref{fig:decomp}. On the line through the origin and $\omega$ we choose a point $Q$ so that the vector $xQ$ is orthogonal to the vector $x$. Let $\alpha$ denote the unit vector in the direction $xQ$; our goal is to express, at $x$, the vertical directional derivative
$\omega \cdot \nabla p$ in terms of the radial derivative $p_r$ and the (angular) derivative in the
direction $\alpha$. 
\begin{figure}
\begin{center}
\epsfig{file=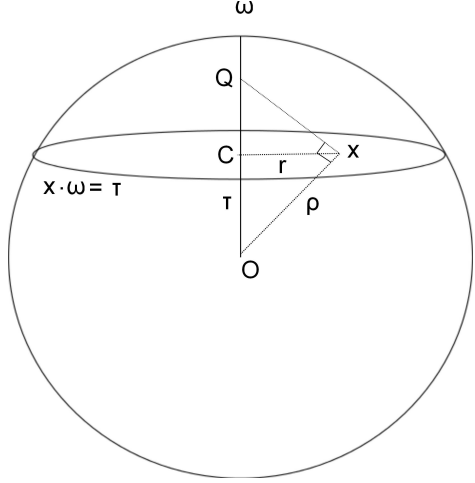, height=2.5in}
\end{center}
\caption{The decomposition of $\omega$}
\label{fig:decomp}
\end{figure}
From the similar triangles $OCx$ and $xCQ$ we have
\[
\frac{|xQ|}{|x|} =\frac{|CQ|}{|Cx|} = \frac{|Cx|}{|OC|}
\]
that is $|xQ| = \rho r/\tau$ and $|CQ|=r^2/\tau$. Now, as vectors we have $xQ=xC+CQ$ so
\[
\frac{\rho r}{\tau} \alpha = - r \hat{r} + \frac{r^2}{\tau} \omega
\]
where $\hat{r}$ is the unit vector in the radial direction at $x$, that is in the direction $Cx$. Hence
\[
\omega = \frac{\rho}{r} \alpha + \frac{\tau}{r} \hat{r}
\]
implying
\[
(\omega \cdot \nabla p)(x) = \frac{\tau}{r} p_r(x) + \frac{\rho}{r} (\alpha \cdot \nabla p)(x).
\]
Substituting this in (\ref{eq:Pt}) we obtain
\begin{align}
P_\tau(\tau, \omega) &=  \tau \int_0^{2 \pi} \int_0^\infty p_r \, dr \, d \theta
+ \int_{x \cdot \omega=\tau}  \frac{\rho}{r} (\alpha \cdot \nabla p)(x) \, dS_x
\nn \\
& = - 2 \pi \tau p(\tau \omega) +  
\int_{x \cdot \omega=\tau}  \frac{\rho}{r} (\alpha \cdot \nabla p)(x) \, dS_x.
\label{eq:ptt}
\end{align}
For a fixed $\tau \in [0,1]$, the plane $x \cdot \omega = \tau$ may be parametrized by $\rho$ and $\theta$ and we note that
\[
dS = r \, dr \, d \theta = \rho \, d \rho \, d \theta
\]
because $\rho^2 = r^2 + \tau^2$ gives $r \, dr = \rho \, d \rho$. So using the support of $p$ and (\ref{eq:ptt}) we obtain
\begin{align}
P_{\tau}(\tau, \omega) 
&= - 2 \pi \tau p(\tau \omega) + \int_\tau^1  \int_0^{2 \pi} 
\frac{\rho^2}{ \sqrt{\rho^2 - \tau^2}} \, (\alpha \cdot \nabla p)(x) \, d \theta \, d \rho.
\label{eq:Ptau}
\end{align}

Applying Proposition \ref{prop:vperp} to $v=\alpha$ and noting that $\alpha \perp x$ we have
\begin{align*}
|x|^2 ( \alpha \cdot \nabla p)(x) &= \sum_{i<j} (\alpha \cdot T_{ij}) ( T_{ij} \cdot \nabla p)(x)
=  \sum_{i<j} (\alpha \cdot T_{ij}) (\Omega_{ij} p)(x).
\end{align*}
Now $ |\alpha| =1$ and $|T_{ij}| \leq 2 |x|$, so $ |\alpha \cdot T_{ij}| \leq 2 |x|$ and hence
\[
|x|^2 \, | (\alpha \cdot \nabla p)(x)|
\leq 2|x| \sum_{i<j}  | (\Omega_{ij} p)(x) |.
\]
So (\ref{eq:Ptau}) leads to 
\[
\tau |p(\tau \omega)| \cleq  |P_\tau(\tau, \omega)| + \sum_{i<j} \int_\tau^1 \int_0^{2 \pi} 
\frac{\rho}{\sqrt{\rho^2 - \tau^2}} | (\Omega_{ij} p)(x)| \, d \theta \, d \rho;
\]
{\bf note that the $x$ in the above integral lies on the plane $x \cdot \omega=\tau$ and $\rho$, $\theta$ determine a unique $x$ on this plane.}
Hence, using the Cauchy-Schwartz inequality,
\begin{align*}
\tau^2 |p(\tau \omega)|^2
&  \cleq  |P_\tau(\tau, \omega)|^2 +
\left ( \int_\tau^1 \int_0^{2 \pi} 
\frac{\rho}{\sqrt{\rho^2 - \tau^2}} \, d \theta \, d \rho
\right )
\,  \sum_{i<j} \int_\tau^1 \int_0^{2 \pi} 
\frac{\rho}{\sqrt{\rho^2 - \tau^2}} | (\Omega_{ij} p)(x)|^2 \, d \theta \, d \rho
\\
& \cleq  |P_\tau(\tau, \omega)|^2 +
 \sum_{i<j} \int_\tau^1 \int_0^{2 \pi} 
\frac{\rho}{\sqrt{\rho^2 - \tau^2}} | (\Omega_{ij} p)(x)|^2 \, d \theta \, d \rho.
\end{align*}
Hence
\begin{align}
\tau^2 \int_S |p(\tau \omega)|^2 \, d \omega
& \cleq \int_S |P_\tau(\tau, \omega)|^2 \, d \omega
+ \sum_{i<j} \int_S \int_\tau^1 \int_0^{2 \pi}  \frac{\rho}{\sqrt{\rho^2 - \tau^2}} | (\Omega_{ij} p)(x)|^2 \, d \theta \, d \rho \, d \omega.
\label{eq:integPtau}
\end{align}
If we define
\[
f(x) :=   \frac{1}{\sqrt{\rho^2 - \tau^2}} | (\Omega_{ij} p)(x)|^2
\]
then the second integral on the RHS of (\ref{eq:integPtau}) is (below $e = (0,0,1)$)
\begin{align*}
\int_S \int_\tau^1 \int_0^{2 \pi} f(x) \, \rho \, d \theta \, d \rho \, d \omega
& = \int_S \int_{x \cdot \omega = \tau} f(x) \, dS_x \, d \omega
 = \int_S \int_{\R^3} f(x) \, \delta(x \cdot \omega - \tau) \, dx \, d \omega
\\
& = \int_{\R^3} f(x) \left ( \int_S \delta(x \cdot \omega - \tau) \, d \omega \right ) \, dx
\\
& = \int_{\R^3} f(x) \left ( \int_S \delta(|x | \, e \cdot \omega - \tau) \, d \omega \right ) \, dx
\\
& = 2 \pi  \int_{\R^3} f(x) \int_0^ \pi \delta( |x| \, \cos u - \tau) \, \sin u \, du \, dx
\\
&= 2 \pi \int_{\R^3} \frac{ f(x)}{|x|} H(|x| - \tau) \, dx.
\end{align*}
Hence (\ref{eq:integPtau}) gives us, for all $\tau \in [0,1]$,
\begin{align}
\tau^2 \int_S |p(\tau \omega)|^2 \, d \omega
 & \cleq \int_S |P_\tau(\tau, \omega)|^2 \, d \omega
+ \sum_{i<j} \int_{|x| \geq \tau}  \frac{1}{\rho \sqrt{\rho^2 - \tau^2}} | (\Omega_{ij} p)(x)|^2 \, dx
\nn
\\
& \cleq \int_S |P_\tau(\tau, \omega)|^2 \, d \omega
+ \int_\tau^1 \frac{ \rho}{\sqrt{\rho^2 - \tau^2}}  \sum_{i<j} \int_S 
 | (\Omega_{ij} p)(\rho \omega)|^2 \, dw \, d \rho.
\label{eq:Ptaudiff}
\end{align}


\subsection{The proof  of Theorem \ref{thm:backscatter}}

We are given that $\alpha_1(-\omega,\omega, s) = \alpha_2(-\omega,\omega, s)$ for all $\omega \in S$ and all
$s \in [0,2]$. Hence, from Proposition \ref{prop:identity}
\beqn
\int_{x \cdot \omega = \tau} p(x) \, dS_x = - \int_{-1}^\tau \int_{x \cdot \omega=t} p(x) \, k(x,\tau, \omega) \, dS_x \, dt, \qquad \forall \tau \in [-1,0], ~ \forall \omega \in S.
\label{eq:tiden}
\eeqn
It will be more convenient to deal with positive $\tau$ rather than negative $\tau$, so in (\ref{eq:tiden}) we replace $\omega$ by $-\omega$, $\tau$ by $-\tau$ and $t$ by $-t$. We obtain
\beqn
\int_{x \cdot \omega = \tau} p(x) \, dS_x = \int_{\tau}^1 \int_{x \cdot \omega=t} p(x) \, k'(x,\tau, \omega) \, dS_x \, dt, \qquad \forall \tau \in [0,1], ~ \forall \omega \in S
\label{eq:iden}
\eeqn
where
\[
k'(x,\tau, \omega) = -k(x,-\tau, -\omega).
\]
Differentiating (\ref{eq:iden}) with respect to $\tau$, we have, for all $\tau \in [0,1]$ and all $\omega \in S$,
\begin{align*}
P_\tau(\tau, \omega) 
&= - \int_{x \cdot \omega=\tau} p(x) \, k'(x, \tau, \omega) \, dS_x + \int_\tau^1 \int_{x \cdot \omega=t}
p(x) \, k'_\tau(x,\tau,\omega) \, dS_x \, dt;
\end{align*}
here $k'_\tau$ is the $\tau$ partial derivative of $k'$.
Noting that $p$ is supported in the unit ball, we have for all $ \tau \in [0,1]$
\begin{align*}
\int_S |P_\tau(\tau, \omega)|^2 \, d \omega
&
\cleq \int_S  \int_{x \cdot \omega = \tau} |p(x)|^2 \, dS_x \, d \omega
+ \int_\tau^1 \int_S \int_{x \cdot \omega =t}  |p(x)|^2 \, dS_x \, d \omega \, dt
\\
&= \int_B |p(x)|^2 \int_S \delta( x \cdot \omega - \tau) \, d \omega \, dx
+ \int_\tau^1 \int_B |p(x)|^2 \int_S \delta(x \cdot \omega -t) \, d \omega \, dx \, dt.
\end{align*}
These $d \omega$ integrals were computed earlier to be $2 \pi |x|^{-1} H(|x|-\tau)$ and
$2 \pi |x|^{-1} H(|x|-t)$ respectively, so 
\begin{align}
\int_S |P_\tau(\tau, \omega)|^2 \, d \omega
& \cleq \int_{|x| \geq \tau} \frac{|p(x)|^2}{|x|}  \, dx + \int_\tau^1 \int_{|x| \geq t} \frac{|p(x)|^2}{|x|}
 \, dx \, dt
\cleq \int_{|x| \geq \tau} \frac{|p(x)|^2}{|x|} \, dx
\nn
\\
& \cleq \int_\tau^1 \rho \int_S | p(\rho \omega)|^2 \, d \omega.
\label{eq:vol2}
\end{align}
So using (\ref{eq:Ptaudiff}) we obtain, for all $\tau \in [0,1]$ and all $\omega \in S$,
\begin{align*}
\tau^2 \int_S | p(\tau \omega)|^2 \, d \omega
& \cleq \int_\tau^1 \rho \int_S |p(\rho \omega)|^2 \, d \omega + 
\int_\tau^1 \frac{ \rho}{\sqrt{\rho^2 - \tau^2}} \sum_{i<j} \int_S 
 | (\Omega_{ij} p)(\rho \omega)|^2 \, dw \, d \rho.
\end{align*}
If we define
\[
E(\rho) := \int_S |p(\rho \omega)|^2  \, d \omega, \qquad \rho \in [0,1]
\]
then using the angular derivative property  (\ref{eq:angular}) of $p$ we obtain, for all $\tau \in [0,1]$,
\begin{align*}
\tau^2 E(\tau)
&  \cleq  \int_\tau^1 \rho E(\rho) \, d \rho
+ \int_\tau^1 \frac{\rho}{\sqrt{\rho^2 - \tau^2}} E(\rho) \, d \rho
 \cleq \int_\tau^1 \frac{\rho}{\sqrt{\rho^2 - \tau^2}} E(\rho) \, d \rho
\\
& \cleq \int_\tau^1 \frac{ E(\rho)}{\sqrt{\rho-\tau}} \, d \rho.
\end{align*}

Pick any small $\ep>0$; then for all $\tau \in [\ep,1]$ we have
\[
E(\tau) \cleq \int_\tau^1 \frac{E(\rho)}{\sqrt{\rho - \tau}}  \, d \rho.
\]
Substituting this inequality back in itself we obtain, for all $\tau \in [\ep, 1]$, 
\begin{align*}
E(\tau) \cleq \int_{\tau}^1 \int_\rho^1 \frac{ E(s)}{ \sqrt{ \rho - \tau} \, \sqrt{s - \rho}} \, ds \, d \rho
= \int_\tau^1 E(s) \int_\tau^s \frac{ 1}{ \sqrt{ \rho - \tau} \, \sqrt{s - \rho}} \, d \rho \, ds
= \pi \int_\tau^1 E(s) \, ds.
\end{align*}
Hence, by Gronwall's inequality, $E(\tau)=0$ for all $\tau \in [\ep,1]$ for all $\ep>0$. So $p=0$ and the theorem is proved.
%


\section{Proof of Theorem \ref{thm:elementary}}

\begin{enumerate}
\item[(a)] If $p=q_2-q_1$ and $\alpha = \alpha_1 - \alpha_2$ then, from Proposition \ref{prop:identity}, we have for the fixed $\omega$ and all $\tau \in [-1,1]$ that
\[
P(\tau, \omega) := \int_{x \cdot \omega=\tau} p(x) \, dS_x = - \int_{-1}^\tau \int_{x \cdot \omega=t} p(x) \, k(x,\omega,\tau) \, dS_x \, dt.
\]
Since $p=q_2 - q_1 \geq 0$ we obtain
\begin{align*}
P(\tau, \omega) \leq \int_{-1}^\tau \int_{x \cdot \omega =t} p(x) \, |k(x,\omega, \tau)| \, dS_x \, dt
\leq C \int_{-1}^\tau P(\tau, \omega) \, dt, \qquad \forall \tau \in [-1,1].
\end{align*}
Hence by Gronwall's inequality $P(\tau, \omega)=0$ for all $\tau \in [-1,1]$ for this fixed $\omega$. Since $p \geq 0$ and is continuous, this implies $p(x)=0$ on $ x \cdot \omega = \tau$ for all $\tau \in [-1,1]$. Since $p$ is supported in the unit ball we obtain $p=0$.
%
\item[(b)]

Define
\[
k_{max} = \max ~ \{ (|k|+ |k _\tau|)(x, \omega, \tau)  \, : \,  x \in \R^3, ~ \omega \in S, ~ \tau \in [-1,0],  - 1 \leq x \cdot \omega \leq \tau, ~ |x| \leq 1\}.
\]
Again, from the hypothesis and Proposition  \ref{prop:identity} we have
\[
P(\tau, \omega) = - \int_{-1}^\tau \int_{x \cdot \omega=t} p(x) \, k(x,\omega,\tau) \, dS_x \, dt,
\qquad \forall \omega \in S, ~ \tau \in [-1,0].
\]
Hence, for all $\omega \in S, ~ \tau \in [-1,0]$ we have
\begin{align*}
|P_\tau( \tau, \omega)|  & \leq  \int_{x \cdot \omega=\tau} |p(x)| \, |k(x,\omega,\tau)| \, dS_x 
+ \int_{-1}^\tau \int_{x \cdot \omega=t} |p(x)| \, |k_\tau(x,\omega,\tau)| \, dS_x \, dt
\\
& \leq k_{max} \left ( \int_{x \cdot \omega=\tau} |p(x)| \, dS_x 
+ \int_{-1}^\tau \int_{x \cdot \omega=t} |p(x)| \, dS_x \, dt \right )
\end{align*}
so, since $p$ is supported in the unit ball, from the Cauchy-Schwartz inequality
\[
|P_\tau(\omega, \tau)|^2 \leq 3 \pi k_{max}^2 \left ( \int_{x \cdot \omega=\tau} |p(x)|^2 \, dS_x 
+ \int_{-1}^\tau \int_{x \cdot \omega=t} |p(x)|^2 \, dS_x \, dt \right )
\]
and hence
\begin{align*}
\int_{-1}^ 1 |P_\tau(\omega, \tau)|^2 \, d \tau
& \leq 3 \pi k_{max}^2 \left ( \int_{-1}^1 \int_{x \cdot \omega=\tau} |p(x)|^2 \, dS_x \, d \tau
+ \int_{-1}^1 \int_{-1}^\tau \int_{x \cdot \omega=t} |p(x)|^2 \, dS_x \, dt \, d \tau \right )
\\
& \leq 12 \pi k_{max}^2 \int_{\R^3} |p(x)|^2 \, dx.
\end{align*}

Noting that $P(\tau, \cdot)=0$ for $|\tau| \geq 1$, from the Plancherel formula and the observation that
$P(-\tau,\omega) = P(\tau, - \omega)$ we have
\begin{align*}
\int_{\R^3} |p(x)|^2 \, dx &= \frac{1}{8 \pi^2} \int_{-1}^1 \int_S |P_\tau (\omega, \tau)|^2 \, d \omega \, d \tau   
= \frac{1}{4 \pi^2} \int_{-1}^0 \int_S |P_\tau (\omega, \tau)|^2 \, d \omega \, d \tau 
\\
& \leq 3 k_{max}^2 \int_{\R^3} |p(x)|^2 \, dx. 
\end{align*}
Hence $p=0$ if we can find an $M>0$ so that $3 k_{max}^2 < 1$ if $\|q\|_{C^2(\R^3)} \leq M$.

The expression for $k(x,\omega, \tau)$ is given in Proposition (\ref{prop:identity}). With that in mind, we note that as $x, \tau$ vary over the region region $-1 \leq x \cdot \omega \leq \tau \leq 0$,
the point $(x, s)$ with $s \in [x \cdot \omega, 2 \tau  - x \cdot \omega]$ will vary over the region
$(x,t)$ with $-1 \leq x \cdot \omega \leq t \leq 0$. Hence, if we define
\[
\|u\|_* := \max \{ |u(x,t)|+|u_t(x,t)| \, : \, |x| \leq 1, ~ -1 \leq x \cdot \omega \leq t \leq 0 \},
\]
where $u(x,t)$ is the solution of (\ref{eq:ude}) - (\ref{eq:uic}) corresponding to $q=q_1$ or $q=q_2$,
then from the expression for $k(x,\omega,\tau)$ in Proposition (\ref{prop:identity}) we have
\[
k_{max} \leq  4 \|u\|_* + 2 \|u\|_*^2 + 8 \|u\|_* + 4\|u\|_*^2 + 4 \|u\|_*^2  = 12 \|u\|_* + 10 \|u\|_*^2
\]
So the proof of the proposition will be complete if we can show the following.

\begin{proposition}\label{prop:stability}
If $q(x)$ is a smooth function on $\R^3$ with support in the unit ball, $u(x,t, \omega)$ the solution of (\ref{eq:ude}) - (\ref{eq:uic}), and $\|q\|_{C^2}$ is small enough (independent of $u$) then
$ \|u\|_* \leq 8 \|q\|_{C^2}$.
\end{proposition}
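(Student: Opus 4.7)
The plan is to convert the problem into a Volterra-type integral equation via Duhamel's principle and the retarded fundamental solution of the 3D wave operator, then close a Picard iteration using the smallness of $\|q\|_{C^2}$. Starting from the fact that $U = \delta(t-x\cdot\omega) + u\,H(t-x\cdot\omega)$ satisfies $U_{tt} - \Delta U = -qU$ with $U = \delta(t-x\cdot\omega)$ for $t<-1$, convolving against the retarded fundamental solution $G(x,t) = \delta(t-|x|)/(4\pi|x|)$ of $\Box$ in $\R^3$ yields, for $t > x\cdot\omega$,
\[
u(x,t) = u_0(x,t) + (Au)(x,t),
\]
where
\[
u_0(x,t) = -\frac{1}{4\pi}\int_{\R^3}\frac{q(y)\,\delta(t-|x-y|-y\cdot\omega)}{|x-y|}\,dy
\]
is a surface integral of $q$ over the paraboloid $\{y : |x-y|+y\cdot\omega = t\}$, and
\[
(Av)(x,t) = -\frac{1}{4\pi}\int_{|x-y|+y\cdot\omega\le t}\frac{q(y)\,v(y,t-|x-y|)}{|x-y|}\,dy.
\]

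Next I would parameterize the paraboloid by $(\rho,\theta)$ with $\rho = \sqrt{t+x\cdot\omega - 2y\cdot\omega}$ and $\theta$ a polar angular coordinate in $\omega^\perp$; a direct computation shows that the singular surface-measure factor cancels against the $|x-y|^{-1}$ weight, giving the regular representation $u_0(x,t) = -\frac{1}{4\pi}\int_0^{2\pi}\int_0^\infty q(y(\rho,\theta,x,t))\,\rho\,d\rho\,d\theta$ with $y(\rho,\theta,x,t) = x_\perp + \frac{1}{2}(t+x\cdot\omega-\rho^2)\omega + \rho\sqrt{t-x\cdot\omega}\,e(\theta)$. Differentiating under the integral and using the compact support of $q$ together with the compactness of $\Omega = \{|x|\le 1,\ -1\le x\cdot\omega\le t\le 0\}$, one expects to obtain $\|u_0\|_* \le c_0\|q\|_{C^2}$ for a modest absolute constant $c_0 \le 4$. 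The analogous calculation for $A$ produces an interior derivative plus a boundary term on the paraboloid and yields $\|Av\|_* \le c_1\|q\|_{C^1}\|v\|_*$ with an absolute $c_1$; hence once $M$ is small enough that $c_1 M < 1/2$, the Neumann series $u = \sum_n A^n u_0$ converges with $\|u\|_* \le 2\|u_0\|_* \le 8\|q\|_{C^2}$.

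The hard part is bounding $(u_0)_t$, together with the analogous boundary term in $(Av)_t$, with constants compatible with the target $8$. Because the parameterization carries a $\sqrt{t-x\cdot\omega}$ factor multiplying $e(\theta)$, $\pa_t y$ contains an apparent $1/\sqrt{t-x\cdot\omega}$ singularity at the characteristic surface. The cancellation must come from $\theta$-symmetry: expanding $\nabla q$ about $y(0,\theta,x,t) = x_\perp + \frac{1}{2}(t+x\cdot\omega)\omega$ (which is $\theta$-independent), the leading $\theta$-odd contribution integrates to zero since $\int_0^{2\pi}e(\theta)\,d\theta = 0$, and the next-order remainder involves $D^2 q$ — which is precisely why the $C^2$ norm is needed and what pins down the explicit factor $8$ after tracking constants.
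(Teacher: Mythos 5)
Your proposal is correct in substance but proceeds differently from the paper at the decisive step, namely the bound on $u_t$. The paper never differentiates the singular layer: since $q$ is time-independent, $u_t$ solves the same characteristic IVP (\ref{eq:ude})--(\ref{eq:uic}) with new data on $t = x\cdot\omega$, which the paper computes explicitly by integrating transport relations along the characteristic plane --- $(u_t + \omega\cdot\nabla u)(x, x\cdot\omega) = -q/2$, while $\omega\cdot\nabla$ applied to $(u_t - \omega\cdot\nabla u)(x,x\cdot\omega)$ equals a function $Q$ built from second derivatives of $q$, which is exactly where $C^2$ enters there. It then proves a single lemma via a Green's identity: if $\|q\|_\infty \le 1/4$ then $\|a\|_\infty \le 2\|f\|_*$ with $\|f\|_* = \sup |\omega\cdot\nabla f|$, in whose proof an integration by parts \emph{inside} the plane $s = y\cdot\omega$ leaves only the tangential derivative of the data, and the paraboloid integral is controlled by the same Jacobian cancellation $|\pa_{y_3}(t - x_3 - y_3 - |y|)| = (t-x_3)/|y|$ that you exploit; the lemma is applied once to $u$ and once to $u_t$. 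Your route --- one Volterra equation, a Neumann series (which also yields existence, and is the [Ro74]-style argument the paper itself flags as giving sharper estimates), and a direct attack on the apparent $(t - x\cdot\omega)^{-1/2}$ singularity in $(u_0)_t$ by angular averaging --- is viable, and your diagnosis that $D^2 q$ enters through the Taylor remainder matches the paper's accounting. One caution: you must expand about the $\theta$-independent center at \emph{fixed} $\rho$, i.e. $y_0(\rho) = y(\rho,\theta) - \rho\sqrt{t - x\cdot\omega}\, e(\theta)$, or equivalently observe that every $\theta$-independent contribution dies against $\int_0^{2\pi} e(\theta)\, d\theta = 0$; expanding naively about the vertex $y(0)$ as you wrote and killing only the linear term leaves a remainder of size $\rho^2 \|D^2 q\|_\infty$, hence an unkilled $O\bigl(\rho^3 (t-x\cdot\omega)^{-1/2}\bigr)$ integrand, and $\rho$ stays of order one as $t - x\cdot\omega \to 0$ on the relevant region. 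With that repaired, the rest is routine constant-tracking: one finds roughly $|u_0| \le \tfrac12 \|q\|_\infty$ and $|(u_0)_t| \le \tfrac14\bigl(\|\nabla q\|_\infty + \|D^2 q\|_\infty\bigr)$, and the boundary term in $(Av)_t$ needs only $\|q\|_\infty$ (not $C^1$), so $\|Av\|_* \cleq \|q\|_\infty \|v\|_*$ and smallness closes the series comfortably within the stated constant $8$. In sum: the paper's argument avoids all delicate cancellation at the wavefront at the cost of separately computing the characteristic data of $u_t$, while yours is more self-contained and proves existence simultaneously, at the cost of the more delicate differentiated-layer estimate.
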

The proof of Proposition \ref{prop:stability} is given in section \ref{sec:stability}.
\end{enumerate}

\subsection{Proof of Proposition \ref{prop:stability}}\label{sec:stability}

$u(x,t,\omega)$ is the solution of the characteristic initial value problem (\ref{eq:ude}) - (\ref{eq:uic}). Further, $u_t(x,t,\omega)$ is also the solution of (\ref{eq:ude}) - (\ref{eq:uic}) except that the characteristic condition (\ref{eq:ubc}) will  have a different RHS. So Proposition \ref{prop:stability} will follow from an estimate for a characteristic initial value problem if we can just determine
$u_t(x, x \cdot \omega, \omega)$.  

There is no loss of generality in assuming that $\omega= (0,0,1)$; below $u_i$ will denote the partial derivative of $u$ with respect to 
$x_i$ and we will stop showing the dependence of $u$ on $\omega$. Since
\[
u(x_1, x_2, x_3, x_3) = -\frac{1}{2} \int_{-\infty}^0 q( x_1, x_2, x_3 + s) \, ds, 
\]
we have
\[
(u_3 + u_t)(x, x_3) = -\frac{q(x)}{2}.
\]
Also, from (\ref{eq:ude})
\begin{align*}
\pa_3[ ( u_t - u_3)(x, x_3)] & = (u_{tt} - u_{33})(x,x_3) = (u_{11} + u_{22} - qu)(x, x_3)
\\
&=  -\frac{1}{2} \int_{-\infty}^0 (q_{11} + q_{22})( x_1, x_2, x_3 + s) \, ds
+  \frac{q(x)}{2} \int_{-\infty}^0 q( x_1, x_2, x_3 + s) \, ds
\\
&= (\text{call it})\, Q(x).
\end{align*}
Hence
\[
(u_t - u_3)(x, x_3) = \int_{-\infty}^0 Q(x_1, x_2, x_3 + s ) \, ds
\]
so
\[
u_t(x, x_3) =  -\frac{ q(x)}{4} + \frac{1}{2} \int_{-\infty}^0 Q(x_1, x_2, x_3 + s ) \, ds.
\]
Since $Q$ depends on the second order derivatives of $q$, Proposition \ref{prop:stability} follows from the following result for solutions of characteristic initial boundary value problems.
\begin{figure}
\begin{center}
\epsfig{file=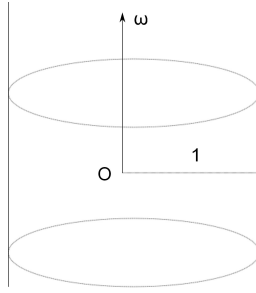, height=1.5in}
\end{center}
\caption{Support of $f$}
\label{fig:cylinder}
\end{figure}
Suppose $\omega$ is a unit vector in $\R^3$,  $q(x)$  a smooth function on $\R^3$ which is supported on the unit ball. 
Further, let $f(x)$ be a smooth function on $\R^3$ with $f(x)$ supported in the cylinder of radius $1$ with
 axis the line through the origin parallel to $\omega$, and $f(x)$ zero if $ x \cdot \omega \leq -1$, that is 
\[
\text{supp} ~ f \subseteq \{ x \in \R^3 \, : \, \| x -  \la x, \omega \ra \omega \| \leq 1, ~ x \cdot \omega \geq -1 \};
\]
see Figure \ref{fig:cylinder}.
Let $a(x,t)$ be the solution of the characteristic IVP
\begin{align}
a_{tt} - \Delta a + qa &= 0, \qquad (x,t) \in \R^3 \times \R, ~~t \geq x \cdot \omega
\label{eq:ade}
\\
a(x, x \cdot \omega) &= f(x), ~~ x \in \R^3,
\label{eq:abc}
\\
a(x,t)&=0 \qquad t < -1.
\label{eq:aic}
\end{align}
Define $\|a\|_\infty := \sup \{ |a(x,t)| \, : \, \|x\| \leq 1, ~ -1 \leq x \cdot \omega \leq t \leq 0 \}$,
\[
\|q\|_\infty := \sup_{x \in \R^3} |q(x)|, \qquad
\|f\|_* := \sup \{ | (\omega \cdot \nabla f)(x)| \, : \, x \in \R^3, ~ x \cdot \omega \leq 0 \}.
\]
We show that if $\|q\|_\infty \leq 1/4$  then
\[
\|a\|_\infty \leq 2  \|f\|_*.
\]
We obtain a very crude estimate but that will be enough for our purposes. A much sharper estimate may be obtained along with a proof of the existence of $a$ using a Volterra argument as in [Ro74].

We prove our claim by expressing $a$ as the solution of an integral equation. The derivation of this integral equation is formal, using 
the Green's function for the wave equation; a rigorous derivation would imitate the construction of the Green's function for the wave 
equation. For a fixed $(x,t) \in \R^3 \times \R$ with $ t > x \cdot \omega \geq -1$, define
\[
G(y,s) = \frac{1}{4 \pi} \frac{ \delta( t-s- |x-y| )}{ |x-y|};
\]
then $G(y,s)$ is the solution of the  backward IVP
\begin{align*}
\Box_{y,s} G(y, s) &= \delta(x-y,t-s), \qquad (y,s) \in \R^3 \times \R
\\
G(y, s) &= 0, ~~ s>t, ~ y \in \R^3.
\end{align*}
Since, for a fixed $y$, $G(y,s)$ is zero for $s$ large, and, for a fixed $s$, $G(y,s)$ is compactly supported in $y$, an application of the divergence theorem gives us 
\begin{align}
a(x,t) & = \int_{s \geq y \cdot \omega} a(y,s) \, \delta(x-y, t-s) \, dy \, ds
= \int_{s \geq y \cdot \omega} a(y,s) \, \Box_{y,s} G(y, s)  \, dy \, ds
\nn
\\
& = -\int_{s \geq y \cdot \omega} \Box_{y,s} a \, \, G  \, dy \, ds
+ 
\int_{s \geq y \cdot \omega} ( a G_s - G a_s)_s + \nabla_y \cdot( G \nabla_y a- a \nabla_y G) \, dy \, ds
\nn
\\
& = -\int_{s \geq y \cdot \omega} qa \, G \, dy \, ds
+ \frac{1}{\sqrt{2}} \int_{y \cdot \omega = s} G (a_s + \omega \cdot \nabla a)
- a( G_s + \omega \cdot \nabla G) \, dS_{y,s}
\nn
\\
& = - \int_{s \geq y \cdot \omega} qa \, G  \, dy \, ds
+ \int_{\R^3}  G(y, \omega \cdot y) \, \, \omega \cdot \nabla_y ( a (y, \omega \cdot y) )
- a(y, \omega \cdot y) \, \, \omega \cdot \nabla_y ( G (y, \omega \cdot y) )
\, dy
\nn
\\
& = -\int_{s \geq y \cdot \omega} qa \, G  \, dy \, ds
+ 2 \int_{\R^3}  G(y, \omega \cdot y) \, \, \omega \cdot \nabla_y ( a (y, \omega \cdot y) )
\, dy
\nn
\\
& = - \int_{s \geq y \cdot \omega} qa \, G  \, dy \, ds
+ 2 \int_{\R^3}  G(y, \omega \cdot y) \, \, \omega \cdot \nabla_y f(y) 
\, dy.
\label{eq:tempt}
\end{align}
In the last step we used the divergence theorem on the plane $s= y \cdot \omega$ and note that
$a(y,y \cdot \omega)$ is zero if $y \cdot \omega \leq -1$ and $G(y, y \cdot \omega)=0$ if
$ y \cdot \omega \geq t$.

Let $g(y) := \omega \cdot \nabla_y f(y)$ and extend $a(y,s)$ to be zero for $ s < y  \cdot \omega$;
then from (\ref{eq:tempt})
\begin{align*}
4 \pi a(x,t) &=  2 \int_{\R^3}  \frac{g(y) \, \delta (t- y \cdot \omega - |x-y|) } {|x-y|}
\, dy 
- \int_\R \int_{\R^3}  \frac{ q(y) \, a(y,s) \,
\delta (t-s - |x-y|)}{ |x-y|}  \, dy \, ds
\\
& = 2\int_{\R^3}  \frac{g(y)}{ |x-y|} \,
\delta (t- y\cdot \omega - |x-y|) \, dy 
-  \int_{\R^3}  \frac{q(y) \, a(y, t - |x-y|)}{ |x-y|} \, dy 
\end{align*}
For the rest of this subsection we will assume that $|x| \leq 1$, $-1 \leq x \cdot \omega < t \leq 0$.

For the first integral uses values of $g$ on the set 
$y \cdot \omega + |x-y| \leq t$ so $y \cdot \omega \leq 0$; also, because of the support of $a$, the second integral uses values of $a(y,s)$ on the region $y \cdot \omega \leq s$ and $s \leq 0$ because $t-|x-y| \leq t \leq 0$. So 
$|g(y)| \leq \|f\|_*$ and $|a(y,s)| \leq \|a\|_\infty$ on the region of integration. 
Hence because of the support of $g$ and $q$ we have
\begin{align*}
4 \pi |a(x,t)|  & \leq 2 \|f\|_* \int_{y \cdot \omega \geq -1}  \frac{\delta( t - y \cdot \omega - |x-y|)}{ |x-y|} \, dy 
+ \|q\|_\infty \, \|a\|_\infty   \int_{|y| \leq 1}  \frac{1}{ |x-y|} \, dy 
\\
& \leq 2 \|f\|_* \int_{(x+y) \cdot \omega \geq -1}  \frac{\delta( t - (x+y) \cdot \omega - |y|)}{ |y|} \, dy 
+ \|q\|_\infty \, \|a\|_\infty   \int_{|y| \leq 2}  \frac{1}{ |y|} \, dy .
\end{align*}

WLOG we assume that $\omega=(0,0,1)$; then the first integral is  over the paraboloid
\[
\Sigma : = \{ y \in \R^3 :  - \left ( y_3 - \frac{\lambda}{2} \right ) 
= \frac{1}{2 \lambda} (y_1^2 + y_2^2), ~ x_3 + y_3 \geq -1  \}
\] 
where $\lambda := t-x_3>0$. On $\Sigma$, the variables $y_1, y_2$ are restricted to the disk
\[
1 + x_3 + \frac{\lambda}{2} \geq \frac{1}{2 \lambda} (y_1^2 + y_2^2)
\]
that is where 
$
y_1^2 + y_2^2 \leq R^2 
$
with $R^2 = \lambda( \lambda + 2(x_3+1) )$. 
Also, on $\Sigma$
\[
\left | \frac{\pa}{\pa y_3}  ( t - x_3 - y_3 - |y|) \right | =  \left | 1 + \frac{y_3}{|y|} \right |
= \frac{ |y_3 + |y||}{|y|} = \frac{t-x_3}{|y|} = \frac{\lambda}{|y|}
\]
so
\begin{align*}
 \int_{x_3 + y_3 \geq -1}   \frac{\delta (t- x_3 - y_3 - |y|)}{ |y|} \,
 \, dy 
& \leq \frac{1}{\lambda} \int_{y_1^2 + y_2^2 \leq R^2}\, dy_1 \, dy_2 
= \pi ( \lambda + 2 (x_3 + 1) ) \|f\|_*.
\end{align*}
Hence
\begin{align*}
|a(x,t)| & \leq \frac{ \lambda + 2(x_3+1)}{2} \, \|f\|_* + 2 \|q\|_\infty \, \|a\|_\infty
\\
& \leq 2 \|f\|_* + \|q\|_\infty \, \|a\|_\infty,
\end{align*}
so
\[
\|a\|_\infty \leq  \|f\|_* + 2 \|q\|_\infty \, \|a\|_\infty
\]
so if $\|q\|_\infty \leq 1/4$ then
\[
\|a\|_\infty \leq 2 \|f\|_*.
\]

\section{Appendix}
\subsection{Linearization about $q=0$}\label{subsec:born}
We calculate the formal derivative of the map $q \to \alpha(\theta,\omega, s)$ at $q=0$. We show that if $\theta \neq \omega$ then 
this formal derivative at $q=0$ is
\[
p(x) \to \frac{-1}{ 4 \pi \, |\theta - \omega|} \int_{ x \cdot(\theta - \omega)=s} p(x) \, dS_x
\]
and is
\[
p(x) \to \frac{-\delta(s)}{4 \pi}  \, \int_{\R^3} p(x) \, dx
\]
when $ \theta = \omega$. This matches what has been obtained in the literature, from the linearization about $q=0$, of the 
frequency domain far field patterns.

Let $p(x)$ be a smooth function on $\R^3$ which is supported in the unit ball $B$. Let $v(x,t,\omega)$ be the solution of the IVP
\begin{align}
v_{tt} - \Delta v &= - p(x) \, \delta(t- x \cdot \omega),  \qquad (x,t) \in \R^3 \times \R
\label{eq:vappde}
\\
v(x,t) &=0, \qquad x \in \R^3, ~~ t < -1.
\label{eq:vappic}
\end{align}
Then the formal derivative of the map $q \to \alpha(\theta,\omega, s)$ at $q=0$ is the map sending $p(x)$ to 
$-\frac{1}{2 \pi} \int_{x \cdot \theta=1} (\theta \cdot \nabla v)(x, 1-s, \omega) \, dS_x$, which is equal to
$\frac{1}{2 \pi} \int_{x \cdot \theta=1} v_t(x, 1-s, \omega) \, dS_x$ by an argument identical to the one used for proving Theorem
\ref{thm:forward}c.

Now
\begin{align*}
\frac{1}{2 \pi} \int_{x \cdot \theta=1} v_t(x, 1-s, \omega) \, dS_x
 & = - \frac{1}{8 \pi^2} \int_{x \cdot \theta=1} \int_{\R^3 \times \R} p(y) \, \delta( \sigma - y \cdot \omega)
\, \frac{ \delta'(1-s - \sigma - |x-y|)}{|x-y|} \, dy \, d \sigma \, dS_x
\\
& =  - \frac{1}{8 \pi^2} \int_{x \cdot \theta=1} \int_{\R^3 } p(y) \, 
\, \frac{ \delta'(1-s -y \cdot \omega  - |x-y|)}{|x-y|} \, dy \, dS_x.
\end{align*}
So, if $\phi(s)$ is a smooth, compactly supported function on $\R$, then
\begin{align*}
\frac{1}{2 \pi} \int_\R \int_{x \cdot \theta=1} v_t(x, 1-s, \omega) \, \phi(s) \, dS_x \, ds
& =  - \frac{1}{8 \pi^2}  \int_\R \int_{x \cdot \theta=1} \int_{\R^3 } p(y) \, 
\, \frac{ \delta'(1-s -y \cdot \omega  - |x-y|)}{|x-y|} \, \phi(s) \,  dy \, dS_x \, ds
\\
& =  - \frac{1}{8 \pi^2}  \int_{x \cdot \theta=1} \int_{\R^3 } p(y) \, 
\, \frac{ \phi'(1 -y \cdot \omega  - |x-y|)}{|x-y|}  \,  dy \, dS_x 
\\
& =  - \frac{1}{8 \pi^2}  \int_{\R^3} p(y) \, \int_{x \cdot \theta=1}  \frac{ \phi'(1 -y \cdot \omega  - |x-y|)}{|x-y|}  \, dS_x \, dy.
\end{align*}
We show below that for $|y|<1$ we have
\beqn
 \int_{x \cdot \theta=1}  \frac{ \phi'(1 -y \cdot \omega  - |x-y|)}{|x-y|}  \, dS_x =   2 \pi 
 \, \phi( y \cdot (\theta - \omega) )
 =  2 \pi \, \int_\R \phi(s) \, \delta( s - y \cdot (\theta - \omega) ) \, ds
 \label{eq:appiden}
\eeqn
which will prove that
\begin{align*}
\frac{1}{2 \pi}  \int_{x \cdot \theta=1} v_t(x, 1-s, \omega)  \, dS_x 
& = 
\frac{-1}{4 \pi} \int_{\R^3} p(y) \, \delta( s - y \cdot (\theta - \omega) ) \, dy
\\
& = 
\begin{cases}
\frac{-1}{4 \pi \, |\theta - \omega|} \int_{y \cdot(\theta - \omega) = s} p(y) \, dS_y,  & \theta \neq \omega, 
\\
\frac{-\delta(s)}{4 \pi} \, \int_{\R^3} p(y) \, dy, & \theta = \omega,
\end{cases}
\end{align*}
which proves our claim.

It remains to prove (\ref{eq:appiden}). For $|y|<1$, using a translation, a rotation and polar coordinates (below $r= \sqrt{x_1^2+x_2^2}$) we have
\begin{align*}
 \int_{x \cdot \theta=1}  \frac{ \phi'(1 -y \cdot \omega  - |x-y|)}{|x-y|} & \, dS_x
  =  \int_{x \cdot \theta=1-y \cdot \theta}  \frac{ \phi'(1 -y \cdot \omega  - |x|)}{|x|}  \, dS_x
 \\
& = \int_{x_3=1-y \cdot \theta}  \frac{ \phi'(1 -y \cdot \omega  - |x|)}{|x|}  \, dS_x
 \\
 & = 2 \pi \int_0^\infty \frac{ \phi'( 1 - y \cdot \omega - \sqrt{ r^2 + (1- y \cdot \theta)^2} )} { \sqrt{ r^2 + (1- y \cdot \theta)^2}}
 \; r \; dr
 \\
 & =  2 \pi \, \int_{-\infty}^{y \cdot (\theta-\omega)} \phi'(t) \, dt
 \\
 & =  2 \pi \, \phi( y \cdot (\theta - \omega) ).
\end{align*}

\subsection{Far field patterns for translated potentials}\label{subsec:translate}
If $\alpha(\theta,\omega, s)$ is the far field pattern for $q(x)$ and $\beta(\theta,\omega,s)$ the far field pattern of its translate $q(x+a)$,  $a \in \R^3$, then we show that
\[
\beta(\theta,\omega,s) = \alpha(\theta, \omega, s + a \cdot(\theta-\omega)).
\]

From Theorem \ref{thm:forward}(c),
the far field pattern for $q$ is
\[
\alpha(\theta, \omega, s) = \frac{1}{2 \pi} \int_{x \cdot \theta =1} u_t(x, 1-s, \omega) \, dS_x
= \frac{1}{2 \pi} \int_{ x \cdot \theta = \tau} u_t(x, \tau -s, \omega) \, dS_x,
\]
for any large enough $\tau$,
where $u(x,t,\omega)$ is the solution of the IVP (\ref{eq:utempde}), (\ref{eq:utempic}).

Let $v(x,t,\omega)$ be the solution of
\begin{align*}
(v_{tt} - \Delta v)(x,t) + q(x+a) v(x,t)& = q(x+a) \delta(t- x \cdot \omega), \qquad (x,t) \in \R^3 \times \R,
\\
v(x,t,\omega)&=0, \qquad t <<0;
\end{align*}
then
\begin{align*}
(v_{tt} - \Delta v)(x-a,t) + q(x) v(x-a,t) &= q(x) \delta(t+ a \cdot \omega - x \cdot \omega), \qquad (x,t) \in \R^3 \times \R,
\\
v(x,t,\omega)&=0, \qquad t <<0
\end{align*}
and hence $v(x-a,t, \omega) = u(x, t + a \cdot \omega, \omega)$, so 
\[
v(x,t, \omega) = u(x+a, t + a \cdot \omega, \omega), \qquad (x,t) \in \R^3 \times \R.
\]
The far field pattern for $q(x+a)$ is (for $\tau$ large enough)
\begin{align*}
\beta(\theta, \omega, s) & = \frac{1}{2 \pi}\int_{x \cdot \theta = \tau} v_t(x, \tau-s, \omega) \, dS_x
\\
& =  \frac{1}{2 \pi}\int_{x \cdot \theta = \tau} u_t(x+a, \tau-s+ a \cdot \omega, \omega) \, dS_x
\\
& =  \frac{1}{2 \pi}\int_{x \cdot \theta = \tau + a \cdot \theta} u_t(x, \tau-s+ a \cdot \omega, \omega) \, dS_x
\\
& = \frac{1}{2 \pi}\int_{x \cdot \theta = \tau} u_t(x, \tau-s+ a \cdot (\omega-\theta), \omega) \, dS_x,
\qquad \text{from Theorem \ref{thm:forward}(c)}
\\
& = \alpha(\theta, \omega, s + a(\theta - \omega)).
\end{align*}

\section{Acknowledgments}
Rakesh's work was partially supported by NSF grants DMS 0907909, DMS 1312708 and  Gunther Uhlmann's work was partially 
supported by the NSF and a Simons Fellowship and
 part of this work was done when he was an Ordway Distinguished Visitor at the University of Minnesota.
 %

\end{document}